\documentclass[a4paper,12pt, leqno]{amsart}
\usepackage{amsmath,amsthm,amsfonts,amssymb,graphicx,mathrsfs,esint,longtable,mathtools}

\newcommand{\R}{{\mathbb R}}

\newcommand{\haus}{{\mathcal H}}

\newcommand{\Ss}{\mathbb{S}}

\newcommand{\N}{\mathbb{N}}

\newcommand{\ph}{\varphi}
\newcommand{\e}{\varepsilon}
\newcommand{\la}{\lambda}
\newcommand{\diam}{\textnormal{diam$\,$}}

\newcommand{\Ha}{\mathcal{H}}

\newcommand{\zero}{0}

\newtheorem{theorem}{\textbf{THEOREM}}[section]
\newtheorem{lemma}[theorem]{\textsc{Lemma}}
\newtheorem{proposition}[theorem]{\textsc{Proposition}}
\newtheorem{corollary}[theorem]{\textsc{Corollary}}
\theoremstyle{definition}
\newtheorem{definition}[theorem]{\textsc{Definition}}
\newtheorem{question}[theorem]{\textsc{Question}}
{\theoremstyle{remark} \newtheorem{remark}[theorem]{Remark}}
\def\charfn_#1{{\raise1.2pt\hbox{$\chi_{\kern-1pt\lower3pt\hbox{{$\scriptstyle#1$}}}$}}}
\def\leq{\leqslant }
\def\geq{\geqslant }

\def\XXint#1#2#3{{\setbox0=\hbox{$#1{#2#3}{\int}$}
\vcenter{\hbox{$#2#3$}}\kern-.5\wd0}}

\def\le {\leqslant}
\def\ge {\geqslant}
\begin{document}
\title{Quasispheres and metric doubling measures}
\author{Atte Lohvansuu, Kai Rajala and Martti Rasimus} \thanks{Research supported by the Academy of Finland, project number 308659. \newline {\it 2010 Mathematics Subject Classification.} Primary 30L10, Secondary 30C65, 28A75. 
}
\date{}

\begin{abstract} 
Applying the Bonk-Kleiner characterization of Ahlfors $2$-regular quasispheres, we show that a metric two-sphere $X$ 
is a quasisphere if and only if $X$ is linearly locally connected and carries a \emph{weak metric doubling measure}, i.e., a measure that deforms the metric on $X$ without much shrinking. 
\end{abstract}
\maketitle

\renewcommand{\baselinestretch}{1.2}


\section{Introduction}
A homeomorphism $f \colon (X,d) \to (Y,d')$ between metric spaces is \emph{quasisymmetric}, if there exists a homeomorphism $\eta \colon [0,\infty) \to [0, \infty)$ such that 
$$
\frac{d(x_1,x_2)}{d(x_1,x_3)} \le t \, \text{ implies } \, \frac{d'(f(x_1),f(x_2))}{d'(f(x_1),f(x_3))} \le \eta(t) 
$$
for all distinct $x_1,x_2,x_3 \in X$. Applying the definition with $t=1$ shows that quasisymmetric homeomorphisms map all balls to sets that are uniformly round. Therefore, the condition of quasisymmetry can be seen as a global version of conformality or quasiconformality. 

Starting from the work of Tukia and V\"ais\"al\"a \cite{TV}, a rich theory of quasisymmetric maps between metric spaces has been developed. An overarching problem is to characterize the metric spaces that can be mapped to a given space $S$ by a quasisymmetric map. 

This problem is particularly appealing when $S$ is the two-sphere $\Ss^2$. There are connections to geometric group theory, (cf. \cite{Bon}, \cite{BK2}, \cite{BonMer}), complex dynamics (\cite{BLM}, \cite{BM}, \cite{HP}), as well as minimal surfaces (\cite{LW}). 

Bonk and Kleiner \cite{BK} solved the problem in the setting of two-spheres with ``controlled geometry", see also \cite{LW}, \cite{MeWi}, \cite{Raj}, \cite{Se1}, \cite{Wil}. We say that $(X,d)$ is a \emph{quasisphere}, if there is a quasisymmetric map from $(X,d)$ to $\Ss^2$. See Section \ref{sec:prel} for further definitions. 

\begin{theorem}[\cite{BK}, Theorem 1.1] 
\label{BKtheorem}
Suppose $(X,d)$ is homeomorphic to $\Ss^2$ and Ahlfors $2$-regular. Then $(X,d)$ is a quasisphere if and only if it is linearly locally connected. 
\end{theorem}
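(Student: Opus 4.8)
Necessity of linear local connectedness is the easy direction: $\Ss^2$ is linearly locally connected, and for a compact connected metric space this property is a quasisymmetric invariant (the three-point condition sends balls to uniformly round sets, so both parts of the linear local connectedness condition transfer along $f$ and $f^{-1}$). Hence every quasisphere is linearly locally connected, and it remains to prove sufficiency; assume from now on that $(X,d)$ is also Ahlfors $2$-regular.

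The idea is to build the quasisymmetry $X \to \Ss^2$ as a limit of finite combinatorial models. For each $k \in \N$ pick a maximal $2^{-k}$-separated set and let $\mathcal{U}_k$ be the cover of $X$ by the balls of radius $2 \cdot 2^{-k}$ about its points. Ahlfors regularity bounds the overlap of $\mathcal{U}_k$, and linear local connectedness lets one fatten/shrink these balls into Jordan domains so that the nerve $G_k$ of the resulting cover is the $1$-skeleton of a cell decomposition of the topological sphere $X$. The first, and main, task is a scale-independent two-sided estimate for the combinatorial $2$-modulus on these graphs: for continua $E,F \subset X$ one wants $\modu_2 \Gamma_k(E,F) \asymp \psi(\Delta(E,F))$, where $\Delta$ denotes relative distance and $\psi$ is a fixed positive decreasing function — a combinatorial Loewner-type property. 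Planar topology couples the two bounds: on a sphere the combinatorial modulus of a ``connecting'' curve family and that of the transverse ``separating'' family are reciprocal up to a constant, so that the area estimate coming from Ahlfors $2$-regularity controls one of them while the abundance of short connecting curves coming from linear local connectedness (via the Jordan curve theorem, which upgrades local connectivity to genuinely two-dimensional curve families) controls the other.

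Granting this, the second task is to refine the graphs $G_k$ into honest triangulations $\mathcal{T}_k$ of $X$ by $\lambda$-quasiround topological triangles of diameter $\asymp 2^{-k}$ with uniformly bounded combinatorics: a constant $N$, depending only on the data, bounding vertex valences and the number of $\mathcal{T}_{k+1}$-triangles meeting a given $\mathcal{T}_k$-triangle. Large valence or a large refinement ratio would violate one of the two modulus bounds, which is exactly why the combinatorics stay bounded. Each $\mathcal{T}_k$ is an abstract triangulation of the combinatorial sphere, hence realizable geometrically in $\Ss^2$ (e.g. by a circle packing via Koebe--Andreev--Thurston); the Rodin--Sullivan ring lemma turns bounded valence into uniformly non-degenerate realized triangles, and bounded refinement ratios make consecutive realizations compatible with bounded distortion. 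Sending each $\mathcal{T}_k$-triangle of $X$ to its realized counterpart yields homeomorphisms $\phi_k \colon X \to \Ss^2$ that are quasisymmetric at scale $2^{-k}$ with constant independent of $k$.

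It remains to take the limit. The $\phi_k$ are equicontinuous, so by Arzel\`a--Ascoli a subsequence converges uniformly to a continuous $\phi \colon X \to \Ss^2$; the uniform quasisymmetry estimates pass to the limit, and the two-sided modulus bounds prevent $\phi$ from collapsing any nondegenerate continuum, so $\phi$ is injective and therefore a quasisymmetric homeomorphism onto $\Ss^2$. The main obstacle is the combinatorial Loewner estimate of the first step: squeezing a scale-invariant two-sided modulus bound out of the soft hypotheses of Ahlfors $2$-regularity and linear local connectedness, all the while tracking how the topology of the sphere links the connecting and separating families. Once that is in place, the bounded-combinatorics triangulation and the non-degeneracy of the limit follow, though they too rest on that estimate.
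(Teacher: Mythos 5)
This theorem is quoted from Bonk--Kleiner \cite{BK} and is not proved in the present paper; the authors use it as a black box, so there is no internal proof here to compare against. Your sketch is, however, a fair high-level account of the actual Bonk--Kleiner argument: $K$-approximations (your ``nerves of ball covers, fattened to Jordan domains''), a scale-invariant two-sided combinatorial modulus estimate with the Ahlfors $2$-regularity feeding one bound and LLC plus sphere duality feeding the other, realization of the bounded-valence triangulations in $\Ss^2$ via Andreev--Koebe--Thurston together with a ring-lemma nondegeneracy estimate, and an Arzel\`a--Ascoli passage to the limit in which the modulus bounds prevent collapsing. The one place I would caution you is the parenthetical appeal to the Jordan curve theorem to ``upgrade local connectivity to genuinely two-dimensional curve families'': in \cite{BK} the passage from LLC to a lower modulus bound is a real technical effort (their Ferrand-type estimate and Proposition~5.3-style duality between separating and connecting families on the sphere), not a one-line topological observation, and the statement is somewhat subtler than the slogan ``connecting and separating moduli are reciprocal'' suggests, since it holds only after the graph approximations have been shown to have the right topology. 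For the purposes of this paper, though, nothing depends on these details: the authors simply invoke the theorem.
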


Finding generalizations of the Bonk-Kleiner theorem beyond the Ahlfors $2$-regular case and to fractal surfaces is important; applications include Cannon's conjecture on hyperbolic groups, cf. \cite{Bo}, \cite{Kl} (by \cite{Co} the boundary of a hyperbolic group is Ahlfors $Q$-regular with $Q$ greater than or equal to the topological dimension of the boundary). A characterization of general quasispheres in terms of combinatorial modulus is given in \cite[Theorem 11.1]{BK}. However, this result is difficult to apply in practice and in fact an easily applicable characterization is not likely to exist. Several types of fractal quasispheres have been found (cf. \cite{Bi}, \cite{DT}, \cite{Me1}, \cite{V}, \cite{VW}, \cite{Wu}), showing the difficulty of the problem. 

In this paper we characterize quasispheres in terms of a condition related to \emph{metric doubling measures} 
of David and Semmes \cite{DS1}, \cite{DS2}. These are measures that deform a given metric in a controlled manner. 
More precisely, a (doubling) Borel measure $\mu$ is a metric doubling measure of dimension $2$ on $(X,d)$ if there is a metric $q$ on $X$ and $C \geq 1$ 
such that for all $x,y \in X$, 
\begin{equation}
\label{kantapaa} 
C^{-1} \mu(B(x,d(x,y)))^{1/2} \le q(x,y) \le  C \mu(B(x,d(x,y)))^{1/2}. 
\end{equation}
It is well-known that metric doubling measures induce quasisymmetric maps $(X,d) \to (X,q)$. Our main result shows that quasispheres can be characterized using a weaker condition where we basically only assume the first inequality of \eqref{kantapaa}. We call measures satisfying such a condition \emph{weak metric doubling measures}, see Section \ref{sec:prel}. 

\begin{theorem} \label{mainthm}
Let $(X,d)$ be a metric space homeomorphic to $\Ss^2$. Then $(X,d)$ is a quasisphere if and only if it is linearly locally connected and carries a weak metric doubling measure of dimension $2$. 
\end{theorem}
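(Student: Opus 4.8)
The plan is to prove the two implications separately, with the bulk of the work in the ``if'' direction, which we will reduce to Theorem~\ref{BKtheorem}.

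The ``only if'' direction is the soft one. Suppose $(X,d)$ is a quasisphere and fix a quasisymmetric homeomorphism $f\colon (X,d)\to\Ss^2$. Since $\Ss^2$ is linearly locally connected and linear local connectivity is preserved under quasisymmetric homeomorphisms, $(X,d)$ is linearly locally connected. For the measure, put $\mu(A)=\hausk(f(A))$ and let $q(x,y)=|f(x)-f(y)|$ be the pullback of the chordal metric, which is a metric on $X$ because $f$ is injective. Applying the definition of quasisymmetry with $t=1$ gives $f(B(x,d(x,y)))\subset B(f(x),\eta(1)\,q(x,y))$, and reading it in the other order gives $f(B(x,d(x,y)))\supset B(f(x),q(x,y)/(2\eta(1)))$; since $\hausk$ is Ahlfors $2$-regular on $\Ss^2$, both inequalities in \eqref{kantapaa} follow, while the standard roundness estimate for quasisymmetric images of balls shows that $\mu$ is doubling. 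Hence $\mu$ is even a genuine metric doubling measure of dimension $2$, in particular a weak one.

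For the ``if'' direction, let $\mu$ be the given weak metric doubling measure with companion metric $q$, so that $q(x,y)\ge C^{-1}\mu(B(x,d(x,y)))^{1/2}$. The first step is to notice that this single inequality already yields an \emph{upper} mass bound for $q$-balls: if $q(x,z)<r$, then $\mu(B(x,d(x,z)))<(Cr)^2$, so $B_q(x,r)$ is contained in the closed $d$-ball $\{z:d(x,z)\le s\}$, where $s=\sup\{t:\mu(B(x,t))\le (Cr)^2\}$; since $\mu$ is doubling, $\mu(\{d(x,\cdot)\le s\})\lesssim\mu(B(x,s))\le (Cr)^2$, whence $\mu(B_q(x,r))\lesssim r^2$ and therefore $\mu\lesssim\hausk_q$ on Borel sets. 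The decisive step is the reverse bound $\mu(B_q(x,r))\gtrsim r^2$, equivalently the statement that $q$ cannot expand $d$ too much; this, combined with the given inequality, makes $\id\colon(X,d)\to(X,q)$ quasisymmetric. Here the hypotheses that $(X,d)$ is a topological sphere and linearly locally connected are essential: if $\mu(B_q(x,r))$ were much smaller than $r^2$, then the annular region $\{r\le q(x,\cdot)<\Lambda r\}$ would, in the $d$-metric, be traversed only across a set of small $\mu$-mass (since $q\gtrsim\mu(B_d(\cdot))^{1/2}$ forces large $q$-steps to sit on balls of proportionally large $\mu$-mass, and conversely small $\mu$-mass forces small $q$-steps along chains of $d$-balls), which would let one squeeze two well-separated continua in $X$ through a thin set, contradicting the LLC$(2)$ condition for $(X,d)$; a symmetric estimate using LLC$(1)$ completes the quasisymmetry of $\id$. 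Consequently $(X,q)$ is linearly locally connected and, by the two-sided estimate $\mu(B_q(x,r))\approx r^2$, Ahlfors $2$-regular (with $\mu\approx\hausk_q$). Theorem~\ref{BKtheorem} then provides a quasisymmetric homeomorphism $g\colon(X,q)\to\Ss^2$, and $g\circ\id\colon(X,d)\to\Ss^2$ is quasisymmetric, so $(X,d)$ is a quasisphere.

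The step I expect to be the main obstacle is precisely the reverse mass bound $\mu(B_q(x,r))\gtrsim r^2$ — equivalently, upgrading a weak metric doubling measure to (a metric in the quasisymmetry class of) a genuine one. The upper bound, the doubling of $\mu$ on $(X,q)$, the transfer of linear local connectivity once $\id$ is known to be quasisymmetric, and the final appeal to Theorem~\ref{BKtheorem} are all essentially formal, but this lower bound seems to demand a genuine interaction between the measure-theoretic input and the topological rigidity of a linearly locally connected sphere — most plausibly via combinatorial modulus estimates in the spirit of Bonk--Kleiner rather than a purely metric computation, and one must also rule out the dual degeneracy in which the natural ``chain'' pseudometric built from $\mu$ collapses.
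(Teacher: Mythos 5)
Your overall architecture coincides with the paper's: pull back spherical Hausdorff measure for the ``only if'' direction, and for the ``if'' direction deform the metric by $\mu$, show the identity map onto the deformed space is quasisymmetric and that the deformed space is Ahlfors $2$-regular and LLC, then invoke Theorem~\ref{BKtheorem} and compose. The gap is exactly where you flag one, and it is not a small one: the lower mass bound $\mu(B_q(x,r))\gtrsim r^2$ --- equivalently the upper bound $q_\mu(x,y)\le C\mu(B_{xy})^{1/2}$ of Proposition~\ref{estimate} --- is the entire technical content of the paper, occupying two sections, and your paragraph about ``squeezing two well-separated continua through a thin set'' is a heuristic rather than an argument. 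The actual proof is a quantitative discrete coarea argument, not a combinatorial-modulus argument in the Bonk--Kleiner style: one covers a metric annulus around $x$ by balls of equal small $\mu$-mass $\e^2$, defines a combinatorial distance $u$ to the inner sphere by counting balls along chains, uses the weak condition \eqref{weakd} to force $u\gtrsim\sqrt{m}$ on the outer sphere (where $m$ is the total number of balls), pigeonholes among the level sets of $u$ to find one of total mass $\lesssim\mu(B(x,r))^{1/2}$ (Lemma~\ref{lemma:annuluskontinuumi}), uses the topology of $\Ss^2$ to extract from that level set a single continuum separating the boundary components of the annulus, and then glues such separating chains over all scales and locations to build a cheap chain from $x$ to $y$. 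The sphere topology and the LLC condition enter quantitatively and indispensably at the separation step; none of this follows ``formally.''

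A secondary but substantive point: Definition~\ref{wmdm} does not furnish a ``companion metric $q$'' satisfying the first inequality of \eqref{kantapaa}. The hypothesis is the chain condition $\mu(B_{xy})^{1/2}\le C_W\, q_\mu(x,y)$ for the chain functional $q_\mu$, which a priori is only a $[0,\infty]$-valued pseudometric; its finiteness (hence its being a metric at all) is part of what Proposition~\ref{estimate} proves. Starting instead from an honest metric $q$ obeying only the lower bound of \eqref{kantapaa} is a formally different and weaker hypothesis on the pair $(\mu,q)$ --- for instance $q=d$ itself satisfies it for any doubling $\mu$ with $\mu(B(x,r))\le Cr^2$ --- so your ``if'' direction does not start from the stated assumption. (Your ``only if'' direction is fine modulo the routine verification that a genuine metric doubling measure in the sense of \eqref{kantapaa} satisfies the chain condition of Definition~\ref{wmdm}.)
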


To prove Theorem \ref{mainthm} we show, roughly speaking, that the first inequality in \eqref{kantapaa} actually implies the second 
inequality. It follows that $\mu$ induces a quasisymmetric map $(X,d) \to (X,q)$, and $(X,q)$ is $2$-regular and linearly locally connected.  Applying Theorem \ref{BKtheorem} to $(X,q)$ and composing then gives the desired quasisymmetric map. It would be interesting to find higher-dimensional as well as quasiconformal versions of Theorem \ref{mainthm}. See Section \ref{sec:rem} for further discussion. 


\section{Preliminaries} 
\label{sec:prel}
We first give precise definitions. Let $X=(X,d)$ be a metric space. As usual, $B(x,r)$ is the open ball in $X$ with center $x$ and radius $r$, and $S(x,r)$ is the set of points whose distance to $x$ equals $r$. 

\vskip5pt
We say that $X$ is \emph{$\lambda$-linearly locally connected} (LLC), if for any $x \in X$ and $r>0$ it is possible to join any two points in $B(x,r)$ with a continuum in $B(x,\la r)$, and any two points in $X \setminus B(x,r)$ with a continuum in $X \setminus B(x,r / \la)$.

\vskip5pt

A Radon measure $\mu$ on $X$ is \emph{doubling}, if there exists a constant $C_D \ge 1$ such that for all $x \in X$ and $R > 0$, 
\begin{equation}
\label{doubl}
\mu(B(x,2R)) \le C_D \mu(B(x,R)), 
\end{equation}
and \emph{Ahlfors $s$-regular}, $s>0$, if there exists a constant $A \ge 1$ such that for all $x \in X$ and $0<R< \diam X$, 
$$
A^{-1}R^s\le \mu(B(x,R)) \le AR^s. 
$$
Moreover, $X$ is Ahlfors $s$-regular if it carries an $s$-regular measure $\mu$. 
\vskip5pt

We now define weak metric doubling measures. In what follows, we use notation $B_{xy} = B(x,d(x,y)) \cup B(y,d(x,y))$. 
\vskip5pt

Let $\mu$ be a doubling measure on $(X,d)$. For $x,y \in X$ and $\delta>0$, a finite sequence of points $x_0,x_1,\dots, x_m$ in $X$ 
is a \emph{$\delta$-chain from $x$ to $y$}, if $x_0=x$, $x_m=y$ and $d(x_j,x_{j-1}) \le \delta$ for every $j=1,\dots, m$. 

\vskip5pt

Now fix $s>0$ and define a ``$\mu$-length" $q_{\mu,s}$ as follows: set 
$$
q_{\mu,s}^\delta(x,y) \coloneqq \inf \Big\{ \sum_{j=1}^m \mu(B_{x_j x_{j-1}})^{1/s} \colon (x_j)_{j=0}^m \text{ is a $\delta$-chain from $x$ to $y$} \Big\}
$$
and 
$$
q_{\mu,s}(x,y) \coloneqq \limsup_{\delta \to \zero} q_{\mu,s}^\delta(x,y) \in [0,\infty]. 
$$

\begin{definition} \label{wmdm}
A doubling measure $\mu$ on $(X,d)$ is a \emph{weak metric doubling measure of dimension $s$}, if there exists $C_W \ge 1$ such that for all $x, y \in X$, 
\begin{equation}
\label{weakd}
\frac{1}{C_W}  \mu(B_{xy})^{1/s} \le q_{\mu,s}(x,y). 
\end{equation}
In what follows, if the dimension $s$ is not specified then it is understood that $s=2$, and $q_{\mu,2}$ is shortened to $q_{\mu}$. 

\end{definition}




\section{Proof of Theorem \ref{mainthm}}
In this section we give the proof of Theorem \ref{mainthm}, assuming Proposition \ref{estimate} to be proved in the following sections. 
First, it is not difficult to see that if there exists a quasisymmetric map $\ph :X \to \Ss^2$, then $X$ is LLC, and 
$$ 
\mu(E) \coloneqq \haus^2(\ph(E))
$$
defines a weak metric doubling measure on $X$. Therefore, the actual content in the proof of Theorem \ref{mainthm} is the existence of a quasisymmetric parametrization, assuming LLC and the existence of a weak metric doubling measure (of dimension $2$). The proof is based on the following result. 

\begin{proposition} \label{estimate}
Let $(X,d)$ be LLC and homeomorphic to $\Ss^2$. Moreover, assume that $(X,d)$ carries a weak metric doubling measure $\mu$ 
of dimension $2$. Then $q_{\mu}$ is a metric on $X$ and $\mu$ is a metric doubling measure in $(X,q_\mu)$, that is there exists a constant $C_S \ge 1$ such that also the bound
$$ 
q_{\mu}(x,y) \le C_S\mu(B_{xy})^{1/2}
$$
holds for all $x,y \in X$.
\end{proposition}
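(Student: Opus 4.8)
The plan is to establish the upper bound $q_\mu(x,y) \le C_S \mu(B_{xy})^{1/2}$ by constructing, for each pair $x,y$, an efficient $\delta$-chain whose $\mu$-length is controlled by $\mu(B_{xy})^{1/2}$, uniformly as $\delta \to 0$. The natural strategy is a stopping-time / Whitney-type decomposition of a path (or, more robustly, of a continuum) joining $x$ to $y$. Since $X$ is LLC and homeomorphic to $\Ss^2$, we may join $x$ and $y$ by a continuum $E \subseteq B(x, \lambda d(x,y))$; the LLC condition also gives, at every scale, connectedness of complements of balls, which is what prevents the measure from ``collapsing'' and is presumably where the weak metric doubling inequality \eqref{weakd} gets used to upgrade the lower bound into a reverse (upper) bound. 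First I would fix a small $\delta$ and cover $E$ by balls $B(z_i, r_i)$ with $r_i \approx \delta$ centered at points of $E$, extract a chain $x = x_0, x_1, \dots, x_m = y$ running through these centers with consecutive distances $\le \delta$, and estimate $\sum_i \mu(B_{x_i x_{i-1}})^{1/2} \lesssim \sum_i \mu(B(z_i, C\delta))^{1/2}$.

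The core of the argument is then a comparison of $\sum_i \mu(B(z_i, C\delta))^{1/2}$ with $\mu(B_{xy})^{1/2}$. Here I expect to need a doubling/geometric argument showing that the balls $B(z_i, \delta)$ have bounded overlap, so that $\sum_i \mu(B(z_i,\delta)) \lesssim \mu(B_{xy})$, and then to pass from a sum of square roots to the square root of the sum one needs to control the \emph{number} of balls — i.e. one needs $m \lesssim$ something like $\mu(B_{xy}) / (\inf_i \mu(B(z_i,\delta)))$, combined with Cauchy–Schwarz: $\sum_i \mu(B(z_i,C\delta))^{1/2} \le m^{1/2} \big(\sum_i \mu(B(z_i,C\delta))\big)^{1/2}$. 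But a naive bound on $m$ is far too large as $\delta \to 0$, so the crude single-scale chain cannot work directly. Instead I would run a multi-scale (dyadic) construction: build the chain by bisecting, at each stage replacing a ``long'' step between $u$ and $v$ by an intermediate point $w$ chosen (using LLC at scale $\approx d(u,v)$) so that $\mu(B_{uw})^{1/2} + \mu(B_{wv})^{1/2} \le \theta\, \mu(B_{uv})^{1/2}$ for some fixed $\theta < 1$ — i.e. a quasi-additivity-with-contraction estimate for the functional $\mu(B_{\cdot\cdot})^{1/2}$ along a well-chosen midpoint. Iterating such a bisection gives a geometric series $\sum_k \theta^k \mu(B_{xy})^{1/2} \lesssim \mu(B_{xy})^{1/2}$ bounding the total $\mu$-length of the resulting chain at \emph{every} fine scale $\delta$, which yields the claim.

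The main obstacle — and the step I would spend the most effort on — is precisely producing that midpoint $w$ with the contraction estimate $\mu(B_{uw})^{1/2} + \mu(B_{wv})^{1/2} \le \theta\,\mu(B_{uv})^{1/2}$, $\theta<1$. This is where the hypotheses genuinely interact: the weak metric doubling inequality \eqref{weakd} forces $q_\mu(u,v) \gtrsim \mu(B_{uv})^{1/2}$, so if no such contracting midpoint existed one could iterate to make arbitrarily fine $\delta$-chains all of $\mu$-length $\gtrsim \mu(B_{uv})^{1/2}$ — not a contradiction by itself, so the real input must be a geometric separation statement: using LLC one finds an annulus $B(u,\rho)\setminus B(u,\rho/\lambda)$ (or a ``half-way sphere'' separating $u$ from $v$ inside $B_{uv}$) on which one can pick $w$ so that $B_{uw}$ and $B_{wv}$ each capture only a definite fraction of $\mu(B_{uv})$; doubling then converts ``definite fraction of the measure'' into the needed strict inequality for the square roots after possibly iterating a bounded number of scales. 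Once this local lemma is in hand, assembling the chain, checking the $\delta$-chain condition (each final step has diameter $\to 0$ because the bisection depth is unbounded on $B_{xy}$), summing the geometric series, and verifying that $q_\mu$ is a genuine metric (symmetry and the triangle inequality are immediate from the definition; positivity for $x\neq y$ is exactly \eqref{weakd}) are all routine, and combining the upper bound with \eqref{weakd} finishes the proof of Proposition \ref{estimate}.
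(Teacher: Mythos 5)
Your overall strategy — build a multi-scale chain whose total $\mu$-cost is bounded uniformly in $\delta$ — is the right instinct, and you correctly identify that a naive single-scale chain covering a continuum cannot work. But the contraction lemma you propose as the core of the argument is actually impossible, and the paper uses a genuinely different mechanism. If for every pair $u,v$ one could find a midpoint $w$ with $\mu(B_{uw})^{1/2} + \mu(B_{wv})^{1/2} \le \theta\,\mu(B_{uv})^{1/2}$ for a fixed $\theta < 1$, then iterating the bisection $k$ times would produce arbitrarily fine $\delta$-chains from $x$ to $y$ of total cost at most $\theta^{k}\,\mu(B_{xy})^{1/2}\to 0$, forcing $q_\mu \equiv 0$ and contradicting the lower bound \eqref{weakd}. (Your ``geometric series $\sum_k\theta^k$'' conflates refinement levels with chain steps: the cost of a $\delta$-chain is only the sum over consecutive pairs at the finest level, not an accumulation across levels.) Even the borderline $\theta=1$ cannot be won pointwise from LLC and doubling — on the round sphere with Lebesgue measure a geodesic midpoint gives $\mu(B_{uw})^{1/2}+\mu(B_{wv})^{1/2}\approx\mu(B_{uv})^{1/2}$ only up to constants reflecting the overlap in $B_{xy}$, and in a general doubling space one extracts at best $\theta\le C$ with $C>1$, which blows up under iteration.

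The paper sidesteps this by never trying to pick a single good intermediate point. The key tool is Lemma \ref{lemma:annuluskontinuumi}, a coarea/averaging argument in an annulus: cover $\overline{B(x,2^{5k}r)}\setminus B(x,2^{2k}r)$ by small balls of comparable $\mu$-mass $\approx\varepsilon^2$; define a discrete distance function $u$ as the minimal ball-chain length from the inner sphere; the weak-MDM lower bound \eqref{weakd} forces $u\gtrsim\sqrt m$ on the outer sphere, so there are at least $\sqrt m/C$ ``level sets''; hence on average some level set $A_j$ satisfies $\sum_{5B_i\subset A_j}\mu(5B_i)^{1/2}\lesssim\mu(B(x,r))^{1/2}$. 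Because $X$ is homeomorphic to $\Ss^2$, the closure of this cheap level set contains a \emph{continuum} separating $B(x,r)$ from the far complement, together with a $\delta$-chain of controlled cost covering it. Then, around each of $x$ and $y$, these annuli are nested at geometrically decaying measure scales by choosing $L$ large (cf.\ \eqref{ananas}), the separating continua link up into one continuum joining $x$ to $y$ by Lemma \ref{lemma:kontinuumilemma}, and the total cost is a convergent geometric series in the \emph{measure scale of the annuli} — not in bisection level. That averaging-to-find-a-cheap-separating-set step, rather than a cheap midpoint, is precisely the idea your sketch is missing, and it is also why dimension $2$ is essential (the square root turns a sum of measures into a sum of measure square roots at the right rate) and why the topology of $\Ss^2$ enters (to upgrade a separating set to a separating continuum).
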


We will apply the well-known growth estimates for doubling measures. The proof is left as an exercise, see \cite[ex. 13.1]{heinonen}. 

\begin{lemma}\label{lemma:tuplauslemma}
Let $X$ be as in Proposition \ref{estimate} and let $\mu$ be a doubling measure on $X$. Then there exist constants $C, \alpha>1$ depending only on the doubling constant $C_D$ of $\mu$ such that
\[
\frac{\mu(B(x, r_2))}{\mu(B(x, r_1))}\leq C\max\left\lbrace\left(\frac{r_2}{r_1}\right)^\alpha, \left(\frac{r_2}{r_1}\right)^{1/\alpha} \right\rbrace
\]
for all $0<r_1, r_2<\mathrm{diam}(X)$.
\end{lemma}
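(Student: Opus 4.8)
\textbf{Plan of proof for Lemma \ref{lemma:tuplauslemma} (growth estimates).}

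The plan is to reduce everything to the single doubling inequality \eqref{doubl} by iterating it dyadically. First I would treat the case $r_1 \le r_2$. Write $k = \lceil \log_2(r_2/r_1) \rceil$, so that $r_2 \le 2^k r_1$ and $k \le \log_2(r_2/r_1) + 1$. Applying \eqref{doubl} $k$ times gives
\[
\mu(B(x,r_2)) \le \mu(B(x,2^k r_1)) \le C_D^k \, \mu(B(x,r_1)).
\]
Since $k \le \log_2(r_2/r_1)+1$, we have $C_D^k \le C_D \cdot C_D^{\log_2(r_2/r_1)} = C_D \, (r_2/r_1)^{\alpha}$ with $\alpha = \log_2 C_D$. (If $C_D = 1$ the measure is trivial in the relevant sense and the statement is immediate; otherwise $\alpha > 0$, and replacing $\alpha$ by $\max\{\alpha,1\}$ if necessary we may assume $\alpha \ge 1 > 1/\alpha$.) This establishes the bound with the exponent $\alpha$ in the regime $r_2 \ge r_1$, where $\max\{(r_2/r_1)^\alpha,(r_2/r_1)^{1/\alpha}\} = (r_2/r_1)^\alpha$.

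Next I would handle the case $r_1 > r_2$. Here one cannot iterate \eqref{doubl} in a direction that shrinks balls, so instead I would use the \emph{reverse} growth estimate, which also follows from \eqref{doubl} together with the fact that $X$ is a connected space of diameter comparable to its own (more precisely, that $X = \Ss^2$ topologically is bounded and connected, so balls of radius up to $\diam X$ have positive measure and there is a lower mass bound). Concretely: for $r_1 > r_2$, applying the case already proved with the roles reversed yields $\mu(B(x,r_1)) \le C (r_1/r_2)^\alpha \mu(B(x,r_2))$, hence
\[
\frac{\mu(B(x,r_2))}{\mu(B(x,r_1))} \ge \frac{1}{C}\left(\frac{r_2}{r_1}\right)^{\alpha},
\]
which is a lower bound, not the upper bound we want. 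To get the stated \emph{upper} bound $C(r_2/r_1)^{1/\alpha}$ when $r_2 < r_1$ one needs the nontrivial direction, namely that $\mu$ cannot concentrate too much: $\mu(B(x,r_2))/\mu(B(x,r_1)) \le C' (r_2/r_1)^{\beta}$ for some $\beta > 0$. This is the standard fact that doubling measures on connected spaces satisfy a reverse doubling / lower mass bound; one derives it by choosing a point $z$ on the sphere $S(x, (r_1+r_2)/2)$ (nonempty by connectedness of $X$ when $r_1 \le \diam X$), noting $B(z, (r_1-r_2)/2) \subset B(x,r_1) \setminus B(x,r_2)$, and comparing $\mu(B(z,\cdot))$ with $\mu(B(x,r_1))$ via finitely many applications of \eqref{doubl}; this forces $\mu(B(x,r_1)) \ge (1+c)\mu(B(x,r_1/2))$ for a fixed $c>0$ whenever $r_1 \le \diam X$, and iterating gives the power bound with some exponent $\beta$. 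After renaming constants and enlarging $\alpha$ so that simultaneously $\alpha \ge \log_2 C_D$ and $1/\alpha \le \beta$, both regimes are captured by the single formula $\max\{(r_2/r_1)^\alpha,(r_2/r_1)^{1/\alpha}\}$.

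The main obstacle is precisely the second regime $r_2 < r_1$: the naive dyadic iteration of \eqref{doubl} only goes in the direction that \emph{enlarges} balls, so the upper bound in the shrinking direction is \emph{not} a formal consequence of doubling alone and genuinely uses connectedness of $X$ (here guaranteed since $X$ is homeomorphic to $\Ss^2$) together with the restriction $r_1, r_2 < \diam X$. Everything else — the choice of $k$, the bookkeeping with $\log_2 C_D$, and the final adjustment of constants so that one exponent $\alpha$ serves both cases — is routine. Since the paper explicitly relegates this to \cite[ex.~13.1]{heinonen}, I would in fact just cite that reference and record the two exponents' provenance ($\alpha$ from direct doubling, $1/\alpha$ from reverse doubling on the connected space $X$) in a single sentence rather than reproducing the argument.
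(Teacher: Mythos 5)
The paper gives no proof of this lemma at all---it is explicitly left as an exercise with a pointer to \cite[ex.~13.1]{heinonen}---and your sketch supplies exactly the standard argument that reference intends: it is correct, and you rightly isolate that the regime $r_1\le r_2$ is pure dyadic iteration of \eqref{doubl} while the regime $r_2<r_1$ is the genuinely nontrivial reverse-doubling step that uses connectedness of $X$ and the restriction $r_1<\diam X$. The only detail to tidy in a full write-up is that the sphere $S(x,(r_1+r_2)/2)$ is guaranteed nonempty only for radii somewhat below $\diam X$ (say at most $\diam X/2$), which costs nothing beyond an adjustment of the constants $C$ and $\alpha$.
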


Combining Proposition \ref{estimate} and Lemma \ref{lemma:tuplauslemma} shows that $q_{\mu}$ induces a quasisymmetric map. This is essentially Proposition 14.14 of \cite{heinonen}. We include a proof for completeness. 

\begin{corollary} \label{regular}
Let $X$ and $\mu$ be as in Proposition \ref{estimate}. Then the identity mapping $i \colon (X,d) \to(X,q_{\mu})$ is quasisymmetric, and 
$(X,q_{\mu})$ is Ahlfors $2$-regular.
\end{corollary}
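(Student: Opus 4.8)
The plan is to deduce the quasisymmetry of the identity map $i\colon (X,d)\to(X,q_\mu)$ directly from the two-sided estimate
$$
C_W^{-1}\,\mu(B_{xy})^{1/2}\le q_\mu(x,y)\le C_S\,\mu(B_{xy})^{1/2},
$$
which combines the defining inequality \eqref{weakd} of a weak metric doubling measure with the reverse bound supplied by Proposition \ref{estimate}. First I would record that $\diam(B_{xy})$ is comparable to $d(x,y)$ and that $B_{xy}$ contains a ball of radius comparable to $d(x,y)$, so that by the doubling property of $\mu$ (applied a bounded number of times) $\mu(B_{xy})$ is comparable, with constants depending only on $C_D$, to $\mu(B(x,d(x,y)))$. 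Hence it suffices to control the ratio $q_\mu(x,x_1)/q_\mu(x,x_2)$ by a function of $d(x,x_1)/d(x,x_2)$, and by the displayed estimates this reduces to bounding
$$
\frac{\mu(B(x,d(x,x_1)))}{\mu(B(x,d(x,x_2)))}
$$
in terms of $d(x,x_1)/d(x,x_2)$.

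This last step is exactly the content of Lemma \ref{lemma:tuplauslemma}: the ratio above is at most $C\max\{t^\alpha,t^{1/\alpha}\}$ where $t=d(x,x_1)/d(x,x_2)$, with $C,\alpha$ depending only on $C_D$. Combining, I would set
$$
\eta(t)\coloneqq C_S C_W\sqrt{C'\,C}\,\max\{t^{\alpha/2},t^{1/(2\alpha)}\},
$$
where $C'$ absorbs the passage between $B_{xy}$ and centered balls, and check that $\eta$ is a homeomorphism of $[0,\infty)$ (it is continuous, strictly increasing, vanishes at $0$ and tends to $\infty$) and that $d(x,x_1)/d(x,x_2)\le t$ forces $q_\mu(x,x_1)/q_\mu(x,x_2)\le\eta(t)$. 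A minor point is to handle the degenerate case where some of the quantities vanish: since $\mu$ is a doubling Radon measure on a connected space with more than one point, $\mu(B_{xy})>0$ whenever $x\ne y$, so $q_\mu$ is a genuine metric (already part of Proposition \ref{estimate}) and all ratios are well defined for distinct points. This proves $i$ is $\eta$-quasisymmetric.

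For Ahlfors $2$-regularity of $(X,q_\mu)$, I would transport the measure $\mu$ and estimate $\mu$ of a $q_\mu$-ball. Given $p\in X$ and $0<\rho<\diam_{q_\mu}(X)$, the two-sided comparison $q_\mu(x,y)\asymp\mu(B_{xy})^{1/2}\asymp\mu(B(x,d(x,y)))^{1/2}$ shows that the $q_\mu$-ball $B_{q_\mu}(p,\rho)$ is trapped between two $d$-balls $B(p,r_1)$ and $B(p,r_2)$ with $\mu(B(p,r_i))\asymp\rho^2$; combined once more with the doubling property this yields $A^{-1}\rho^2\le\mu(B_{q_\mu}(p,\rho))\le A\rho^2$ for a constant $A$ depending only on $C_D$, $C_W$, $C_S$. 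Thus $\mu$ is $2$-regular on $(X,q_\mu)$. The main obstacle in this argument is not conceptual but bookkeeping: tracking how the various multiplicative constants (from the doubling lemma, from the $B_{xy}$-versus-centered-ball comparison, and from Proposition \ref{estimate}) combine into a single control function $\eta$ and a single regularity constant $A$, and verifying the monotonicity and boundary behaviour of $\eta$; all of this is routine, following \cite[Proposition 14.14]{heinonen}.
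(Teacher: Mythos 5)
Your proposal is correct and follows essentially the same route as the paper: the two-sided comparison $q_\mu(x,y)\asymp\mu(B_{xy})^{1/2}$ from Proposition \ref{estimate} and \eqref{weakd}, combined with Lemma \ref{lemma:tuplauslemma}, gives the control function $\eta(t)=C\max\{t^{\alpha/2},t^{1/(2\alpha)}\}$, and the $2$-regularity follows by comparing $q_\mu$-balls with $d$-balls exactly as in the paper (which instead picks a point on the $q_\mu$-sphere using connectedness). The only step the paper makes explicit that you elide is verifying that $i$ is a homeomorphism, but this follows from the ratio condition together with the fact that $q_\mu$ is a genuine metric, so it is not a gap.
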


\begin{proof}
We denote $q=q_{\mu}$. We first show that $i$ is a homeomorphism. Since $(X, d)$ is a compact metric space, it suffices to show that $i$ is continuous, i.e., that any $q$-ball $B^q(x, r)$ contains a $d$-ball $B^d(x, \delta)$ for some $\delta=\delta(x, r)$. Suppose that this does not hold for some $x\in X$ and $r>0$. Then there exists a sequence $(x_n)_{n=1}^\infty$ such that $d(x_n, x)\rightarrow 0$ but $q(x_n, x)\geq r$ for all $n\in\N$. Now Proposition \ref{estimate} implies 
\[
r\leq q(x_n, x)\leq C\mu(B^d(x, 2d(x, x_n)))^{1/2}\overset{n\rightarrow\infty}{\longrightarrow}0,
\]
which is a contradiction. Thus $i$ is a homeomorphism. Let $x, y, z\in X$ be distinct. By Proposition \ref{estimate} and Lemma \ref{lemma:tuplauslemma} we have 
\[
\frac{q(x, y)}{q(x, z)}\leq C\frac{\mu(B_{xy})^{1/2}}{\mu(B_{xz})^{1/2}}\leq C\frac{\mu(B(x, 2d(x, y)))^{1/2}}{\mu(B(x, 2d(x, z)))^{1/2}}\leq \eta\left(\frac{d(x, y)}{d(x, z)}\right),
\]
where $\eta: [0, \infty)\rightarrow [0, \infty)$ is the homeomorphism
\[
\eta(t)=C\max\{t^{\alpha/2}, t^{1/2\alpha}\}.
\]
Thus $i$ is $\eta$-quasisymmetric. 

We next claim that $\mu$ is Ahlfors $2$-regular on $(X,q)$. Fix $x \in X$ and $0<r< \diam(X,q)/10$. Since $(X,q)$ is connected, 
there exists $y \in S^q(x,r)$. Now by Proposition \ref{estimate}, 
$$
C_S^{-2} r^2 \leq \mu(B_{xy}) \leq C_W^2 r^2. 
$$
On the other hand, the quasisymmetry of the identity map $i$ and the doubling property of $\mu$ give 
$$
C^{-1} \mu(B^q(x,r))  \leq \mu(B_{xy}) \leq C \mu(B^q(x,r)),  
$$
where $C$ depends only on $C_D$ and $\eta$. Combining the estimates gives the $2$-regularity. 
\end{proof}

We are now ready to finish the proof of Theorem \ref{mainthm}, modulo Proposition \ref{estimate}. Indeed, Corollary \ref{regular} shows that there is a quasisymmetric map from $(X,d)$ onto the $2$-regular $(X,q_{\mu})$. It is not difficult to see that the quasisymmetric image of a LLC space is also LLC. Hence, by Theorem \ref{BKtheorem}, there exists a quasisymmetric map from $(X,q_{\mu})$ onto $\Ss^2$. Since the composition of two quasisymmetric maps is quasisymmetric, Theorem \ref{mainthm} follows. 

\section{Separating chains in annuli}
We prove Proposition \ref{estimate} in two parts. In this section we find short chains in annuli (Lemma \ref{lemma:annuluskontinuumi}). 
In the next section we take suitable unions of these chains to connect given points. 

We first show that it suffices to consider $\delta$-chains with sufficiently small $\delta$. In what follows, we use notation 
$$
cB_{xy} = B(x,cd(x,y)) \cup B(y,cd(x,y)).
$$

\begin{lemma} \label{tarkkuus}

Let $(X,d)$ be a compact, connected metric space admitting a weak metric doubling measure $\mu$ of some dimension $s>0$. Then for any $r>0$ there exists $\delta_r>0$ such that if $x,y \in X$ with $d(x,y) \ge r$ then we have
\begin{equation}
\label{grope}
2C_WC_D^{2/s} q_{\mu,s}^{\delta_r}(x,y) \ge \mu (B_{xy})^{1/s},
\end{equation}
where $C_W$ and $C_D$ are the constants in \eqref{weakd} and \eqref{doubl}, respectively. 
\end{lemma}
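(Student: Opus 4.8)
The statement says that when $d(x,y) \ge r$, we may replace $q_{\mu,s}(x,y)$ (defined as a $\limsup$ over arbitrarily fine chains) by the coarse quantity $q_{\mu,s}^{\delta_r}(x,y)$ at the cost of an absolute constant. The natural strategy is: given a $\delta_r$-chain from $x$ to $y$, refine it into a $\delta$-chain for arbitrarily small $\delta$ while controlling how much the sum $\sum \mu(B_{x_j x_{j-1}})^{1/s}$ can grow; the weak metric doubling inequality \eqref{weakd} then bounds the original $\mu(B_{xy})^{1/s}$ from above by the refined sum, hence by a constant times the original coarse sum.

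Let me think about which direction the inequality runs. We want $2C_W C_D^{2/s} q_{\mu,s}^{\delta_r}(x,y) \ge \mu(B_{xy})^{1/s}$. By \eqref{weakd}, $\mu(B_{xy})^{1/s} \le C_W q_{\mu,s}(x,y) = C_W \limsup_{\delta\to 0} q_{\mu,s}^\delta(x,y)$. So it suffices to show that for every sufficiently small $\delta$, $q_{\mu,s}^\delta(x,y) \le 2 C_D^{2/s} q_{\mu,s}^{\delta_r}(x,y)$, i.e., refining a coarse chain does not increase the $\mu$-length by more than the factor $2C_D^{2/s}$. Wait — that cannot be right in general, since refining a single edge $[x_{j-1},x_j]$ into many tiny pieces replaces one ball $\mu(B_{x_j x_{j-1}})^{1/s}$ by a sum of many smaller balls, which could in principle be much larger. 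The key must be that $X$ being LLC (or at least connected and compact) lets us choose the refinement along a *continuum of small diameter*, and the doubling/regularity growth estimates keep the total under control.

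So here is the refined plan. First, observe each edge of a coarse chain has $d(x_{j-1},x_j)\le \delta_r$, but the endpoints $x,y$ have $d(x,y)\ge r$; choosing $\delta_r\ll r$ means every edge ball $B_{x_j x_{j-1}}$ is small relative to $B_{xy}$. Second, I would fix $\delta_r$ small enough (depending on $r$, and on the doubling constant via Lemma \ref{lemma:tuplauslemma}) so that for any two points $u,v$ with $d(u,v)\le \delta_r$, one has $\mu(B_{uv})^{1/s}\le \tfrac{1}{2C_W}\cdot(\text{something})$ — more precisely, small enough that the geometric-series bound from Lemma \ref{lemma:tuplauslemma} for subdividing an edge into a dyadically shrinking continuum sums to at most $2C_D^{2/s}$ times the coarse edge weight. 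Concretely, given an edge $u=x_{j-1}$, $v=x_j$ with $t := d(u,v)\le \delta_r$: since $X$ is connected and compact, join $u$ to $v$ and iterate, at scale $2^{-k}t$ building a $\delta$-chain whose consecutive points lie within $2^{-k}t$; each ball at level $k$ has measure controlled by $C(2^{-k})^{\text{exponent}}\mu(B_{uv})$ via the doubling growth estimate, there are $\lesssim 2^{k}$ of them, and raising to power $1/s$ and summing over $k$ gives a convergent geometric series bounded by a constant multiple of $\mu(B_{uv})^{1/s}$ — the constant being absorbable into $2C_D^{2/s}$. Summing over the edges $j=1,\dots,m$ of the coarse chain yields $q_{\mu,s}^\delta(x,y)\le 2C_D^{2/s} q_{\mu,s}^{\delta_r}(x,y)$ for all small $\delta$, and then \eqref{weakd} finishes it.

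**The main obstacle.** The delicate point is producing a refinement of a coarse edge into an arbitrarily fine chain whose total $\mu$-weight is controlled — naive subdivision along an arbitrary connecting continuum need not work, because a continuum of diameter $\sim t$ can have enormous $\mu$-measure if it wiggles. The honest route is to subdivide *metrically*: cover the continuum (or just use connectedness to find intermediate points) at successively finer scales and use Lemma \ref{lemma:tuplauslemma} to bound the number and size of the balls at each scale, so that the contribution at scale $2^{-k}t$ is $\lesssim 2^{k}\cdot C^{1/s}(2^{-k})^{\alpha'/s}\mu(B_{uv})^{1/s}$ with $\alpha'>1$, which is summable. Getting the bookkeeping so that the total constant is exactly $2C_D^{2/s}$ (rather than some larger $s$- and $C_D$-dependent constant) will require choosing $\delta_r$ small depending on how fast that geometric series converges; this is the only real content, and it is why the conclusion is stated with the specific factor $2C_WC_D^{2/s}$ and an $r$-dependent $\delta_r$ rather than a universal $\delta$.
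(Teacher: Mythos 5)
Your plan goes in a genuinely different direction from the paper, and unfortunately it has a gap that cannot be closed. The step you isolate as the main obstacle — showing that refining a single coarse edge $[u,v]$ (with $d(u,v)\le\delta_r$) into a $\delta$-chain increases the $\mu$-weight by at most an absolute multiplicative constant — is not merely delicate, it is essentially the hard content of Proposition~\ref{estimate} itself. The weak metric doubling hypothesis gives only the \emph{lower} bound $\mu(B_{uv})^{1/s}\le C_W q_{\mu,s}(u,v)$; an upper bound $q^{\delta}_{\mu,s}(u,v)\lesssim\mu(B_{uv})^{1/s}$ for arbitrarily small $\delta$ is exactly what the rest of the paper labors to produce, and it genuinely requires the LLC condition and the topology of $\Ss^2$ (via Lemma~\ref{lemma:annuluskontinuumi}), neither of which appears in the hypotheses of Lemma~\ref{tarkkuus}. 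Proving it in this preliminary lemma would be circular.

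The bookkeeping in your sketch also does not come out. Lemma~\ref{lemma:tuplauslemma} controls the decay of the measure of small balls by the exponent $1/\alpha<1$, not by some $\alpha'>1$: for a ball at scale $2^{-k}t$ around a point of the connecting continuum you get $\mu\lesssim (2^{-k})^{1/\alpha}\mu(B_{uv})$, so each term contributes $\gtrsim(2^{-k})^{1/(\alpha s)}\mu(B_{uv})^{1/s}$ with exponent strictly less than $1$. Even granting $N_k\sim 2^k$ balls at scale $2^{-k}t$ (which is itself unjustified — the continuum need not be rectifiable, so no such count is available), the resulting series $\sum_k 2^{k(1-1/(\alpha s))}$ diverges. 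There is no choice of $\delta_r$ that rescues this.

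The paper avoids all of this by arguing by contradiction and compactness. If~\eqref{grope} failed for some $r$, one extracts a sequence $(x_j,y_j)$ with $d(x_j,y_j)\ge r$ admitting $1/j$-chains of very small $\mu$-length; passing to a subsequential limit $(x_j,y_j)\to(x,y)$ with $d(x,y)\ge r$, one prepends $x$ and appends $y$ to the chain for $(x_j,y_j)$, at cost $\mu(B_{xx_j})^{1/s}+\mu(B_{yy_j})^{1/s}$, which can be made arbitrarily small because $\mu(\{x\})=\mu(\{y\})=0$ (doubling measure on a connected, hence uniformly perfect, space). The doubling property then converts $\mu(B_{x_jy_j})$ to $\mu(B_{xy})$, and sending the mesh to zero contradicts~\eqref{weakd} for the fixed pair $(x,y)$. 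No upper bound on $q_{\mu,s}$ of any kind is needed — only the vanishing of point masses and the doubling property. This is the crucial trick your proposal is missing.
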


\begin{proof}
Suppose to the contrary that \eqref{grope} does not hold for some $r>0$. Then there exists a sequence of pairs of points $(x_j,y_j)_j$ for which $d(x_j,y_j) \ge r$ and
$$ q_{\mu,s}^{1/j}(x_j,y_j) < \frac{1}{2C_WC_D^{2/s}} \mu(B_{x_j y_j})^{1/s}$$
for all $j=1,2,3,\dots.$ Then by compactness we can, after passing to a subsequence, assume that $x_j \to x$ and $y_j \to y$ where also $d(x,y) \ge r$. Let then $k \in \N$ be arbitrary and $j \ge k$ so large that $B_{x_j y_j} \subset 4 B_{xy}$,
$$d(x,x_j),d(y,y_j) \le \frac1k$$
and
\begin{equation}
\label{koo1}
\mu(B_{x x_j})^{1/s} + \mu(B_{y y_j})^{1/s} < \frac1{3C_W}\mu(B_{xy})^{1/s} .
\end{equation}
The last estimate is made possible by the fact that $\mu(\{z\})=0$ for every point $z$ in the case of a doubling measure and a connected space, or more generally when the space is \emph{uniformly perfect} (see \cite[5.3 and 16.2]{DS2}). Now choose a $\frac1j$-chain $z_0,\dots,z_m$ from $x_j$ to $y_j$ satisfying 
\begin{equation}
\label{koo2}
 \sum_{i=1}^m \mu(B_{z_i z_{i-1}})^{1/s} < \frac{1}{2C_WC_D^{2/s}} \mu(B_{x_j y_j})^{1/s} \leq \frac{1}{2C_W} \mu(B_{x y})^{1/s}
\end{equation}
so that $x,z_0,\dots,z_m,y$ is in particular a $\frac1k$-chain from $x$ to $y$. Combining \eqref{koo1} and \eqref{koo2}, we have 
$$ q_{\mu,s}^{1/k}(x,y) < \frac5{6C_W}\mu(B_{xy})^{1/s}. $$
This contradicts \eqref{weakd} when $k \to \infty$. 
\end{proof}

In what follows, we will abuse terminology by using a non-standard definition for separating sets. 

\begin{definition}
Given $A,B,K \subset X$, we say that $K$ \emph{separates} $A$ and $B$ if there are distinct connected components $U$ and $V$ of $X \setminus K$ such that $A \subset U$ and $B \subset V$. 
\end{definition}

\begin{lemma}\label{lemma:annuluskontinuumi}
Suppose $(X,d)$ is $\la$-LLC and homeomorphic to $\Ss^2$, and $\mu$ a weak metric doubling measure on $X$. Let $k$ be the smallest integer such that $2^k > \la$. Then there exists $C >1$ depending only on $\lambda$, $C_D$ and $C_W$ such that for any $x \in X, 0<r<2^{-8k}\diam X$ and $\delta >0$ there exists a $\delta$-chain $x_0,\dots,x_p$ in the annulus $\overline{B(x,2^{5k}r)} \setminus B(x,2^{2k}r)$ such that
$$\sum_{j=1}^p \mu(B_{x_jx_{j-1}})^{1/2} \le C \mu(B(x,r))^{1/2}$$
and the union $\cup_j \overline{5B_{x_j x_{j-1}}}$ contains a continuum separating $B(x,r)$ and $X \setminus \overline{B(x,2^{7k}r)}$.
\end{lemma}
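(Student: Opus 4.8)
The plan is to construct the required chain as a union of finitely many "short bridges" across the annulus, obtained by covering a separating sphere-like set by small balls whose $\mu$-measure is controlled, and to extract the separating continuum from the closure of the associated union. First I would reduce to the case $\delta$ small: by Lemma \ref{tarkkuus} (applied with $r$ replaced by a fixed scale comparable to $2^{2k}r$), for the purposes of the $\mu$-length estimate it suffices to work with $\delta \le \delta_0$ for a threshold depending only on the data, since the bound we want is an upper bound on a specific chain's $\mu$-length and making $\delta$ smaller only helps. I would then fix $\rho$ with $2^{2k}r < \rho < 2^{5k}r$, say $\rho = 2^{3k}r$, and work inside the closed annulus $\mathcal{A} = \overline{B(x,2^{5k}r)}\setminus B(x,2^{2k}r)$.

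The heart of the argument is a covering/chaining construction. Using a maximal $\delta$-separated set in the annulus, or rather a Vitali-type cover at scale $\sim\delta$, together with the LLC property to connect consecutive covering balls by small continua, I would produce a graph whose vertices are the chosen points and whose edges are the short links $\{z_i,z_{i-1}\}$; the LLC condition guarantees that points within distance $\delta$ can be joined inside a slightly larger ball, so the union $\bigcup \overline{5B_{z_i z_{i-1}}}$ is connected along each link. To get a \emph{separating} continuum one argues topologically: since $X$ is homeomorphic to $\Ss^2$ and $B(x,r)$, $X\setminus\overline{B(x,2^{7k}r)}$ are disjoint, any connected "ring" of such balls winding around $x$ inside the annulus separates them (Jordan-curve-type reasoning on $\Ss^2$), and a sub-union of the covering that forms such a ring can always be selected because the annulus $\mathcal{A}$ itself, minus a neighborhood of $x$ and of the complement, is connected and "surrounds" $x$. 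Concretely I expect to pick, via LLC, a continuum $E$ joining a point of $S(x,2^{3k}r)$ to "itself around the sphere" — more carefully, take two points on a large sphere and join them in two topologically independent ways, or invoke that on the sphere a small neighborhood of any separating curve contains a separating curve made of covering balls — and then cover $E$ by the $\delta$-balls $B(z_i,\delta)$.

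For the quantitative estimate I would bound $\sum_j \mu(B_{x_j x_{j-1}})^{1/2}$ as follows. Each link has $d(x_j,x_{j-1})\le\delta$ comparable to the mesh, so $B_{x_j x_{j-1}}\subset B(x_j, C\delta)$ and by the doubling/growth Lemma \ref{lemma:tuplauslemma} $\mu(B_{x_j x_{j-1}})^{1/2} \le C (\delta/\rho)^{\beta}\,\mu(B(x_j,\rho))^{1/2} \le C(\delta/\rho)^{\beta}\mu(B(x,C2^{5k}r))^{1/2}$ for some exponent $\beta>0$. Since the $z_i$ can be taken $\sim\delta$-separated, the number of links needed to cover a set of diameter $\le C2^{5k}r$ inside an Ahlfors-type annulus is at most $C(2^{5k}r/\delta)^{\gamma}$ for some $\gamma$; choosing the covering efficiently (one link per $\delta$-ball along a one-dimensional continuum $E$, so $p \le C\,\mathcal H^1$-type count $\le C \,\operatorname{diam}(E)/\delta \le C 2^{5k}r/\delta$) one gets, using doubling to compare $\mu(B(x,2^{5k}r))$ with $\mu(B(x,r))$ at the cost of a constant depending on $k$ (hence on $\lambda$), a total bounded by $C\mu(B(x,r))^{1/2}$ provided $\beta$ beats the $1/\delta$ growth — which requires a careful efficient choice of the chain rather than a crude cover. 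The main obstacle is precisely this tension: one must (i) choose the separating continuum $E$ to be "almost one-dimensional" so that the number of links is $\lesssim 1/\delta$ rather than $1/\delta^2$, and (ii) ensure the links stay in the prescribed annulus while using LLC with constant $\lambda$ (hence the generous gap between $2^{2k}r$ and $2^{5k}r$, and the appearance of $k$ with $2^k>\lambda$). Balancing the LLC dilation $\lambda$ against these scales, and verifying the topological separation claim carefully on $\Ss^2$, is where the real work lies; the measure estimate is then routine doubling bookkeeping.
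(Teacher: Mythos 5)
The proposal has a genuine gap: you never use the weak metric doubling \emph{lower bound} \eqref{weakd} as a quantitative input, and as a result the estimate you sketch cannot close. You bound each link by $\mu(B_{x_jx_{j-1}})^{1/2}\lesssim(\delta/\rho)^\beta\mu(B(x,\rho))^{1/2}$ via Lemma~\ref{lemma:tuplauslemma} and count $p\lesssim \rho/\delta$ links, giving a total of order $(\delta/\rho)^{\beta-1}\mu(B(x,\rho))^{1/2}$. For this to be bounded as $\delta\to 0$ you would need $\beta\ge 1$; but $\beta$ is $1/\alpha$ with $\alpha$ depending only on $C_D$ and can be arbitrarily small. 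There is no ``efficient choice of chain'' that fixes this, because the number of $\delta$-balls needed to cover a separating curve on a fractal sphere can genuinely grow like $\delta^{-\gamma}$ with $\gamma>1$. You note the tension yourself, but leave it as ``where the real work lies'' without resolving it — and the resolution requires a different mechanism.

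The paper's argument works at a different scale and uses the lower bound in an essential, co-area-type way. One covers the annulus $A$ not by balls of radius $\sim\delta$ but by $m$ disjoint balls $B_j$ of comparable \emph{$\mu$-measure} $\approx\varepsilon^2$ (with radii $<\delta/10$, so still $\delta$-admissible). One then defines a combinatorial distance $u(z)$ from the inner sphere $S(x,2^{3k}r)$ to $z$ counted in numbers of covering balls. The weak metric doubling inequality \eqref{weakd} (via Lemma~\ref{tarkkuus}) gives a lower bound $u\gtrsim\sqrt m$ on the outer sphere $S(x,2^{4k}r)$: any ball chain across the sub-annulus has $\mu$-length $\gtrsim\mu(B_{yz})^{1/2}$, and since each covering ball contributes $\approx\varepsilon$ to that length while $m\varepsilon^2\lesssim\mu(B_{yz})$, the chain must contain $\gtrsim\sqrt m$ balls. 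Summing $\mu(5B_i)^{1/2}\approx\varepsilon$ over the $n\gtrsim\sqrt m$ combinatorial ``level sets'' $A_j=\bigcup_{5B_i\cap u^{-1}(j)\neq\emptyset}5B_i$ and averaging yields some $A_j$ with $\sum_{5B_i\subset A_j}\mu(5B_i)^{1/2}\lesssim m\varepsilon/n\lesssim\sqrt{m\varepsilon^2}\lesssim\mu(B(x,r))^{1/2}$. This is exactly the cancellation $1/s+(s-1)/s=1$ at $s=2$ that makes the lemma true, and it is the point your proposal is missing. The separating continuum is then extracted from $\overline{A_j}$: a combinatorial level set of $u$ automatically separates the inner and outer boundaries, and on $\Ss^2$ a compact separator contains a separating continuum (cf.\ Newman). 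Your topological sketch for producing a ``ring of balls'' is roughly compatible with this last step, but without the lower-bound/pigeonhole mechanism there is no way to control its $\mu$-length.
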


\begin{proof}

Let $x \in X, 0<r<2^{-8k}\diam X$ and $\delta >0$ be arbitrary. By Lemma \ref{tarkkuus} we may assume without loss of generality that
\begin{equation} q_{\mu}^{\delta}(y,z) \ge \frac1{C'} \mu(B_{yz})^{1/2}  \label{alaraja} \end{equation} 
for any $y \in S(x,2^{3k} r), z \in S(x,2^{4k}r)$ and also $\delta < r$ by finding a finer chain than possibly asked.

Next we cover the annulus $A=\overline{B(x,2^{5k}r)} \setminus {B(x,2^{2k} r)}$ as follows: Let $\e>0$ be small enough so that $\mu(B(w,\delta/10)) > \e^2$ for every $w \in X$ (see again \cite[16.2]{DS2}). Then for every $w \in A$ we can choose a radius $0<r_w<\delta/10$ with
$$ \frac{\e^2}{2C_D} \le \mu(B(w,r_w)) \le \e^2.  
$$
Using the $5r$-covering theorem, we find a finite number $m$ of pairwise disjoint balls $B_j=B(w_j, r_j)$, $r_j=r_{w_j}$ from the cover $\{B(w,r_w) \}_{w \in A}$, such that
$$ A \subset \bigcup_{j=1}^m 5B_j \subset B(x,2^{6k}r) \setminus \overline{B(x,2^kr)}.$$
Observe that for any point $z $ in the thinner annulus $A'=\overline{B(x,2^{4k}r)} \setminus {B(x,2^{3k} r)}$ there exists a continuum in $A$ joining $z$ to some point $y \in S(x,2^{3k}r)$ by the LLC-property. Hence there exists a subcollection $B_1',\dots,B_n'$ of the cover $(5B_j)$ forming a ball chain from this $y$ to $z$, meaning that $y \in B_1', z \in B_n'$ and $B_j' \cap B_{j+1}' \ne \emptyset$. Thus we can define a ``counting'' function $u$ for this cover on $A'$ by setting $u(z)$ to be the smallest $n \in \{1,\dots,m\}$ so that there exists a ball chain $(B_i')_{i=1}^n$ from some $y \in  S(x,2^{3k} r)$ to $z.$

Using \eqref{alaraja}, we find a lower bound for $u$ on $ S(x,2^{4k} r)$: Let $y \in  S(x,2^{3k} r), z \in  S(x,2^{4k} r)$ be arbitrary and $(B_i')_{i=1}^n=(B(w_i',5r_i'))_{i=1}^n$ the corresponding chain. Then $y=w_0', w_1', \dots, w_n', z=w_{n+1}'$ is also a $\delta$-chain. Hence
$$ \mu(B_{yz})^{1/2} \le C' \sum_{i=1}^{n+1} \mu(B_{w'_iw'_{i-1}})^{1/2} \le C' C_D^3 n \e$$
as every $B_{w'_i w'_{i-1}}$ is contained in $B(w'_l,20r_{w_l'})$, $l=i$ or $i-1$. On the other hand $B(x,2^{7k}r) \subset B(y,2^{7k+1}r)$, and since the balls $B_j$ are disjoint, 
$$ m \e^2 \le \mu(B(y,2^{7k+1}r)) \le C_D^{7k+1} \mu(B_{yz}),$$
implying $n^2 \ge m/C''$ or $u(z) \ge \sqrt{m}/C''$. 

Let then $n$ be the minimal value of $u$ on $S(x,2^{4k}r)$ and for $j=1,2,\dots,n$ define
$$ A_j = \bigcup_{5B_i \cap u^{-1}(j) \ne \emptyset} 5B_i.$$
By the definition of $u$ each ball $5B_i$ can be contained in at most two ``level sets'' $A_j$ and so we obtain a constant $C \ge 1$ such that
\begin{align*}
\min_{1 \le j \le n} \sum_{5B_i \subset A_j} \mu(5B_i)^{1/2} & \le \frac1n \sum_{j=1}^n \sum_{5B_i \subset A_j} \mu(5B_i)^{1/2} \\
& \le \frac1n C_D^3 \e\cdot  2m \\
& \le 2 C_D^3 \frac{\sqrt{m}}{n}\sqrt{\e^2 m} \\
& \le C \mu(B(x,r))^{1/2}.
\end{align*}
Let $j \in \{1,\dots, n\}$ be the index for which the above left hand sum is smallest. Since by construction $A_j$ necessarily intersects any curve joining $B(x,2^kr)$ and $X \setminus \overline{B(x,2^{6k}r)}$, it separates $B(x,r)$ and $X \setminus \overline{B(x,2^{7k}r)}$ by the LLC-property as $2^k > \la$. Hence the closed set $\overline{A_j}$ contains a continuum $K$ separating these sets by topology of $\Ss^2$, see for example \cite{Ne} V 14.3.. Now $K$ is covered by a ball chain $\overline{B(w_0',5r'_{0})}, \dots, \overline{B(w_p',5r'_{p})}$ of closures of balls $5B_i$ contained in $A_j$. Hence these points $w_0',\dots,w_p'$ are the desired $\delta$-chain, since clearly $ d(w_i',w_{i+1}') \le 5r_i'+5r_{i+1}'< \delta$ and
$$ \sum_{i=1}^p \mu(B_{w'_iw'_{i-1}})^{1/2} \le C \mu(B(x,r))^{1/2}$$
by our choice of $j$.
\end{proof}

\begin{remark}
Note that in the claim of the above lemma the constant $C$ is uniform with respect to the required step $\delta$ of the chain; we can in fact find arbitrarily fine chains and have the same estimate from above for $\sum \mu(B_j)^{1/2}$. This is essentially obtained by the doubling property and the $5r$-covering theorem. We also work with dimension $s=2$, since passing from the lower estimate of \ref{tarkkuus} to the upper in the claim we actually switch the power $1/s$ of the measure to $(s-1)/s$, both $1/2$ in the proof. Thus this argument seems not to apply for higher dimension (see Question \ref{high}). Moreover the topology of $\Ss^2$ is used for finding a single separating component, which is not always possible for example on a torus.
\end{remark}



\section{Proof of Proposition \ref{estimate}}
In this section $(X,d,\mu)$ satisfies the assumptions of Proposition \ref{estimate}. Lemma \ref{lemma:annuluskontinuumi} and the $5r$-covering lemma then give the following: For any given $B=B(x, R) \subset X$ and $\delta>0$ there is a cover of the $x$-component $U$ of $B$ by at most $M=M(\la,C_D, L)$ balls $\{B_i\}_{i=1}^m$ with centers in $U$ such that for every $i$
\begin{enumerate}
\item  $L^{-2}\mu(B)\leq\mu(B_i)\leq L^{-1}\mu(B)$
\item A continuum $K_i \subset \overline{2^{7k}B_i}\setminus B_i$ separates $B_i$ and $X\setminus \overline{2^{7k}B_i}$
\item $K_i \subset \bigcup_p\overline{5B_{x^{i}_px^{i}_{p-1}}}$, where $(x^{i}_p)_p$ is a $\delta$-chain
\item $\sum_p\mu(B_{x^{i}_px^{i}_{p-1}})^{1/2}\leq C\mu(B_i)^{1/2}$.
\end{enumerate}
Here $k$ is as in Lemma \ref{lemma:annuluskontinuumi}, $L>C_D^{8k}$ and $C=C(\lambda,C_D,C_W)$. 
 
We would like to take unions of the continua $K_i$ to join points. However, the union $\cup_i K_i$ need not be a connected set. The following lemma takes care of this problem. We denote by $\hat K_i$ the \emph{interior} of $K_i$, i.e., the component of $X\setminus K_i$ that contains $B_i$.

\begin{lemma}\label{lemma:kontinuumilemma}
Let $i\in \{1, 2\}$. Let $B_i=B(x_i, r_i)\subset X$ be a (small) ball and let $K_i\subset\overline{2^{7k}B_i}\setminus B_i$ be a continuum that separates $B_i$ and $X\setminus \overline{2^{7k}B_i}$. Suppose $\hat K_1\cap\hat K_2\neq\emptyset$. 
If $K_1\cap K_2=\emptyset$, then either $K_1\subset \hat K_2$ and $\hat K_1\subset \hat K_2$ or $K_2\subset\hat K_1$ and $\hat K_2\subset \hat K_1$. 
\end{lemma}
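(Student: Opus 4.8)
The plan is to work in $\Ss^2$ and exploit the fact that each continuum $K_i$ is a closed connected set separating the sphere into exactly the two ``sides'' $\hat K_i$ (the component containing $B_i$) and its complement; since $K_i\subset\overline{2^{7k}B_i}\setminus B_i$, the ball $B_i$ sits in $\hat K_i$ and $X\setminus\overline{2^{7k}B_i}$ sits in the other component, and by basic plane/sphere topology (e.g.\ \cite{Ne}) the separation is into precisely two components. Throughout I would write $\hat K_i^c = X\setminus(K_i\cup\hat K_i)$ for the other side, which is open and connected, and note $X = \hat K_i \sqcup K_i \sqcup \hat K_i^c$.

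First I would fix the hypothesis $K_1\cap K_2=\emptyset$. Then $K_2$ is a connected subset of $X\setminus K_1 = \hat K_1 \sqcup \hat K_1^c$, so $K_2$ lies entirely in one of these two open sets; likewise $K_1$ lies entirely in $\hat K_2$ or in $\hat K_2^c$. This gives four a priori cases, and the goal is to show that the hypothesis $\hat K_1\cap\hat K_2\neq\emptyset$ rules out two of them and forces the two ``nested'' conclusions. Next I would record the key implication: if $K_2\subset\hat K_1$, then $\hat K_1^c$ is a connected subset of $X\setminus K_2$, hence contained in $\hat K_2$ or in $\hat K_2^c$; the inclusion $B_2\subset\hat K_2$ together with $B_2\subset X\setminus\overline{2^{7k}B_1}$... — more robustly, one should observe that since $K_2\subset\hat K_1$ and $\hat K_1^c$ is connected and misses $K_2$, either $\hat K_1^c\subset\hat K_2$ or $\hat K_1^c\subset\hat K_2^c$, and similarly $\hat K_1\setminus K_2$ splits. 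The clean way is: $\hat K_1$ is open connected and $\hat K_1\cap\hat K_2\neq\emptyset$, so if $K_2$ were disjoint from $\hat K_1$ we would get $\hat K_1$ connected in $X\setminus K_2$ and meeting $\hat K_2$, forcing $\hat K_1\subset\hat K_2$; then symmetrically $\hat K_2\subset\hat K_1$ would need $K_1\cap\hat K_2=\emptyset$, and chasing containments of the (connected) complementary sides yields a contradiction unless $K_1\subset\hat K_2$ or $K_2\subset\hat K_1$.

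Concretely, I would argue by cases. \textbf{Case A:} $K_2\subset\hat K_1$. Then $X\setminus\hat K_1$ (which equals $K_1\cup\hat K_1^c$, connected since $\hat K_1^c$ is open connected with $K_1$ in its closure) is a connected set disjoint from $K_2$, hence inside $\hat K_2$ or $\hat K_2^c$; it contains $X\setminus\overline{2^{7k}B_1}$ which has nonempty intersection with... here I use that both balls are small relative to $\diam X$, so $X\setminus\overline{2^{7k}B_1}$ meets $X\setminus\overline{2^{7k}B_2}\subset\hat K_2^c$, forcing $X\setminus\hat K_1\subset\hat K_2^c$, i.e.\ $\hat K_2\cup K_2\subset\hat K_1$; thus $K_2\subset\hat K_1$ \emph{and} $\hat K_2\subset\hat K_1$, the second conclusion. \textbf{Case B:} $K_2\cap\hat K_1=\emptyset$, i.e.\ $K_2\subset\hat K_1^c$ (it cannot meet $K_1$). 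Then $\hat K_1$ is connected, disjoint from $K_2$, and meets $\hat K_2$ by hypothesis, so $\hat K_1\subset\hat K_2$; running the same argument with the roles symmetric (now $K_1$ versus $\hat K_2$) and using $\hat K_1\subset\hat K_2$ forces $K_1\subset\hat K_2$ and $\hat K_1\subset\hat K_2$, the first conclusion. These two cases are exhaustive given $K_1\cap K_2=\emptyset$, which completes the proof.

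The main obstacle I anticipate is purely the topological bookkeeping: making sure each complementary side is genuinely connected and genuinely open/closed as needed, and that ``small ball'' is quantitatively enough to guarantee $X\setminus\overline{2^{7k}B_1}$ and $X\setminus\overline{2^{7k}B_2}$ overlap (this needs $2\cdot 2^{7k}\max\{r_1,r_2\}<\diam X$ or a similar inequality, which should be harmless given the ranges of radii produced by Lemma~\ref{lemma:annuluskontinuumi} and the application at the top of this section). A secondary subtlety is that I repeatedly use ``a connected set in $X\setminus K$ lies in one component of $X\setminus K$'' together with ``$X\setminus K$ has exactly two components'' — both valid here by the Jordan-type separation theorem for $\Ss^2$ applied to the continuum $K$ — and I would cite \cite{Ne} for the precise statement, exactly as in the previous lemma.
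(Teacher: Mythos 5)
Your approach---a set-theoretic case analysis on whether $K_2$ lies inside $\hat K_1$ or not---is a genuinely different route from the paper's, which runs a path $\gamma$ from a point $w\in\hat K_1\cap\hat K_2$ out to a far point $z\in X\setminus(\overline{2^{7k}B_1}\cup\overline{2^{7k}B_2})$ and analyzes the first and last parameters at which $\gamma$ meets $K_1\cup K_2$, using path-connected open neighborhoods $U_i\supset K_i$ to transport hitting points. Your route can be made to work, but as written it rests on a false topological premise: you assert that a continuum $K_i$ separates $\Ss^2$ into ``exactly two components,'' so that $\hat K_i^c:=X\setminus(K_i\cup\hat K_i)$ is open and connected. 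That is wrong for a general continuum in $\Ss^2$---a theta-shaped continuum (a circle together with a diameter) is connected, separates a small inner ball from the far exterior, but has three complementary components---and nothing in Lemma~\ref{lemma:annuluskontinuumi} forces the $K_i$ there to be Jordan curves or to have connected complements. You invoke this false claim precisely to justify, in Case~A, that $X\setminus\hat K_1$ is connected.

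The good news is that the statement you actually use, namely that $X\setminus\hat K_1$ is connected, is true, only for a different reason: each component $V$ of $X\setminus K_1$ other than $\hat K_1$ is a nonempty proper open subset of the connected space $X$, so $\partial V\neq\emptyset$ and $\partial V\subset K_1$, whence $V\cup K_1$ is connected; the union of these over all such $V$, all of which contain $K_1$, gives $X\setminus\hat K_1$ connected. With that correction Case~A goes through. Your Case~B also needs more than you indicate: after obtaining $\hat K_1\subset\hat K_2$ you still must produce $K_1\subset\hat K_2$, and the phrase ``running the same argument symmetrically'' does not do it. One way: $\emptyset\neq\partial\hat K_1\subset K_1$ and $\overline{\hat K_1}\subset\overline{\hat K_2}\subset\hat K_2\cup K_2$, so $K_1$ meets $\hat K_2\cup K_2$; since $K_1\cap K_2=\emptyset$, $K_1$ meets $\hat K_2$, and as $K_1$ is connected and lies in $X\setminus K_2$ it follows that $K_1\subset\hat K_2$. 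So the overall structure is sound and shorter than the paper's path argument, but the ``two components'' step must be replaced by the connectedness lemma above, and the Case~B closure must be spelled out.
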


\begin{proof}
Since $X$ is homeomorphic to $\Ss^2$, path components of an open set in $X$ are exactly its components. In addition such components are open. Since $K_1$ and $K_2$ are nonempty disjoint compact sets, there exist path connected open sets $U_1, U_2\subset X$ such that $K_1\subset U_1\subset X\setminus K_2$ and $K_2\subset U_2\subset X\setminus K_1$. Let $w\in \hat K_1\cap \hat K_2$. Let $\gamma: [0, 1]\rightarrow X$ be a path from $w$ to $z\in X\setminus (\overline{2^{7k}B_1} \cup  \overline{2^{7k}B_2})$. By the separation properties $\gamma([0, 1])$ intersects $K_1$ and $K_2$. Let 
\[
s=\inf\{t\in [0, 1]\ |\ \gamma(t)\in K_1\cup K_2\}.
\]
Now $s>0$ and $\tilde\gamma:=\gamma|_{[0, s]}$ is a path that intersects $K_1\cup K_2$ exactly once. Without loss of generality we may assume $\gamma(s)\in K_1$. By construction of $U_1$ the point $w$ can be connected to any point in $K_1$ inside $X\setminus K_2$. Thus $K_1\subset\hat K_2$. Now  let $y\in \hat K_1$. It suffices to show that there exists a path in $\hat K_2$ from $y$ to $w$. Suppose there is no such path. Now the argument of the first part of this proof implies that $K_2\subset \hat K_1$. Let $S$ be the number obtained by changing the infimum in the definition of $s$ to the respective supremum. Necessarily $\gamma(S)\in K_2$, since otherwise we could construct a path in $\hat K_2$ from $w$ to $z$. Since $K_2\subset U_2\subset \hat K_1$, there exists a path connecting $w$ to $\gamma(S)$ in $\hat K_1$, i.e., there exists a path from $w$ to $z$ in $\hat K_1$, which is impossible. Thus $\hat K_1\subset \hat K_2$.
\end{proof}

Motivated by Lemma \ref{lemma:kontinuumilemma} we say that a continuum $K_i$ is \emph{maximal} (in $\{K_i\}_{i=1}^m$) if it is not contained in the interior of some other $K_j$. Define $K$ to be the union of all maximal continua in $\{K_i\}_{i=1}^m$. Clearly $K$ is compact. Let us show that it is also connected. Suppose $K_i$ and $K_j$ are distinct maximal continua.  Let $B_{(i)}$ and $B_{(j)}$ be the balls in $\{B_i\}$ that are contained in the interiors $\hat K_i$ and $\hat K_j$, respectively. Since $\{B_i\}$ is a cover of the $x$-component of $B$, we can find a chain of balls in $\{B_i\}$ connecting any pair of points in the component. On the other hand, every ball $B_i$ intersects the $x$-component, so it suffices to consider the case where $B_{(i)}\cap B_{(j)}\neq\emptyset$. By Lemma \ref{lemma:kontinuumilemma} either $K_i\cap K_j\neq\emptyset$ or we may assume that $K_i\subset\hat K_j$, but the latter contradicts maximality. Thus $K$ is a continuum. We have now proved the following proposition.

\begin{proposition}\label{prop:kontinuumipropositio}
Fix $L > C_D^{8k}$, $\delta>0$, and $B=B(x, R)\subset X$. Then there are at most $M=M(\lambda,C_D, L)<\infty$ balls $B_i$ centered at the $x$-component $U$ of $B$ such that 
\begin{enumerate}
\item $U \subset \cup_i B_i$ 
\item $\mu(B_i)\leq \frac1L\mu(B)$ for all $i$
\item For every $i$ there is a continuum $K_i \subset \overline{2^{7k}B_i}\setminus B_i$ which separates $B_i$ and $X\setminus\overline{2^{7k}B_i}$ 
\item $K_i \subset \bigcup_p\overline{5B_{x^{i}_px^{i}_{p-1}}}$, where $(x^{i}_p)_p$ is a finite $\delta$-chain 
\item $\sum_p\mu(B_{x^{i}_px^{i}_{p-1}})^{1/2}\leq C\mu(B)^{1/2}$, $C=C(\lambda,C_D,C_W)$ 
\item the union $K$ of all maximal continua in $\{K_i\}$ is a continuum. 
\end{enumerate}
\end{proposition}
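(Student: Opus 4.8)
The plan is to read off Proposition~\ref{prop:kontinuumipropositio} from three ingredients that are already available: Lemma~\ref{lemma:annuluskontinuumi}, the $5r$-covering theorem, and Lemma~\ref{lemma:kontinuumilemma}. The first two produce a controlled finite cover of the $x$-component together with the separating continua and chain estimates, i.e. conclusions (1)--(5); the last handles the connectedness statement (6). So the work is essentially bookkeeping plus one topological dichotomy that has been isolated beforehand.

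\emph{The cover, and (1), (2), (5)-count.} Fix $L>C_D^{8k}$, $\delta>0$, $B=B(x,R)$, and let $U$ be the $x$-component of $B$. For each $z\in U$, use that $\mu$ is doubling with no atoms (so $\mu(B(z,\rho))\to0$ as $\rho\to0$) to pick a radius $r_z$ with $5r_z<2^{-8k}\diam X$ and $\mu(B(z,5r_z))\in[L^{-2}\mu(B),L^{-1}\mu(B)]$; this is possible because $L>C_D^{8k}$ makes the value $L^{-1}\mu(B)$ attainable already by balls of admissibly small radius, and $\mu(B(z,\cdot))$ cannot jump over an interval of ratio $L\ge C_D$. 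Apply the $5r$-covering theorem to $\{B(z,r_z)\}_{z\in U}$ to extract a pairwise disjoint subfamily $B(z_i,r_i)$ whose dilates $B_i:=B(z_i,5r_i)$ cover $U$; this gives (1), and (2) is the choice of $r_z$. Since $\mu(B_i)\le L^{-1}\mu(B)<\mu(B)$, no $B_i$ can contain $B(x,R)$, which forces $5r_i<2R$, so the disjoint balls $B(z_i,r_i)$ all lie in $B(x,2R)$ and $m\,C_D^{-3}L^{-2}\mu(B)\le\sum_i\mu(B(z_i,r_i))\le\mu(B(x,2R))\le C_D\mu(B)$, whence $m\le M=M(\lambda,C_D,L)$.

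\emph{Conclusions (3), (4), (5).} For each $i$, apply Lemma~\ref{lemma:annuluskontinuumi} to $B_i$ and the given $\delta$: after matching the dilation factors $2^{2k},2^{5k},2^{7k}$ appearing there, this yields a finite $\delta$-chain $(x^i_p)_p$ inside $\overline{2^{5k}B_i}\setminus 2^{2k}B_i$ such that $\bigcup_p\overline{5B_{x^i_p x^i_{p-1}}}$ contains a continuum $K_i\subset\overline{2^{7k}B_i}\setminus B_i$ separating $B_i$ from $X\setminus\overline{2^{7k}B_i}$, with $\sum_p\mu(B_{x^i_p x^i_{p-1}})^{1/2}\le C\mu(B_i)^{1/2}\le C\mu(B)^{1/2}$ for $C=C(\lambda,C_D,C_W)$. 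These are precisely (3), (4), (5).

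\emph{Conclusion (6).} The union $\bigcup_iK_i$ need not be connected, so pass to maximal continua: call $K_i$ maximal if $K_i\not\subset\hat K_j$ for all $j\ne i$, and set $K$ equal to the union of the maximal $K_i$. Then $K$ is compact, being a finite union of compacta. For connectedness, take distinct maximal $K_i,K_j$ and choose covering balls $B_{(i)},B_{(j)}\in\{B_i\}$ with $B_{(i)}\subset\hat K_i$ and $B_{(j)}\subset\hat K_j$. As $\{B_i\}$ covers the connected set $U$ and each $B_i$ meets $U$, there is a chain of these balls from $B_{(i)}$ to $B_{(j)}$ with consecutive members intersecting, so it suffices to handle $B_{(i)}\cap B_{(j)}\ne\emptyset$, whence $\hat K_i\cap\hat K_j\ne\emptyset$; then Lemma~\ref{lemma:kontinuumilemma} gives that either $K_i\cap K_j\ne\emptyset$ or one of $K_i,K_j$ lies in the interior of the other, and the latter contradicts maximality. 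Hence consecutive maximal continua along the chain meet, so $K$ is a continuum. The only real friction I anticipate is the constant-chasing: confirming that a radius $r_z$ with the prescribed measure and the smallness $5r_z<2^{-8k}\diam X$ always exists, and that the count $m$ genuinely depends only on $\lambda,C_D,L$; the sole topological subtlety, the nested-continua dichotomy, has already been packaged as Lemma~\ref{lemma:kontinuumilemma}, so the connectedness step is then routine.
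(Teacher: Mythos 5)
Your proposal is correct and is essentially the paper's own argument, which appears as the paragraph of bookkeeping preceding the proposition: the paper likewise builds the cover by choosing radii so that $\mu(B_i)\in[L^{-2}\mu(B),L^{-1}\mu(B)]$, extracts a disjoint subfamily via the $5r$-covering theorem, applies Lemma~\ref{lemma:annuluskontinuumi} to each $B_i$ for conditions (3)--(5), and uses Lemma~\ref{lemma:kontinuumilemma} together with maximality to get (6). You fill in the cardinality bound $m\le C_D^4L^2$ and the existence of the radius $r_z$ (using that doubling bounds jump ratios of $\rho\mapsto\mu(B(z,\rho))$ by $C_D<L$, and that $\mu(B(z,2^{-8k}\diam X))\ge C_D^{-8k}\mu(X)>L^{-1}\mu(B)$), which the paper leaves implicit.
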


Now we can finish the proof of Proposition \ref{estimate} with the following: 

\begin{lemma}
There exists a constant $C=C(\lambda, C_D, C_W)$ such that for any $\delta >0$ and $x,y \in X$, 
 \[q^{\delta}_{\mu}(x,y)\leq C\mu(B_{xy})^{1/2}.\]
\end{lemma}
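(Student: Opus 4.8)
The plan is to build the chain from $x$ to $y$ by a dyadic-scale argument: cover $x$ and $y$ by balls at comparable scales, descend through annuli using the separating continua supplied by Proposition \ref{prop:kontinuumipropositio}, and sum a geometric series. First I would fix $\delta>0$ and, using Lemma \ref{tarkkuus}, assume $\delta$ is small compared to $d(x,y)$ and that the lower bound \eqref{alaraja} is available at the relevant scales; this lets me work entirely with $\delta$-chains. Set $R = d(x,y)$ and apply Proposition \ref{prop:kontinuumipropositio} to the ball $B = B(x, c R)$ for a suitable dimensional constant $c$ (large enough that $y$ lies in or near the $x$-component $U$, but $2^{7k}$-dilates stay controlled). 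This produces a continuum $K$ — the union of maximal $K_i$'s — which is connected, is covered by the $\delta$-chains $(x^i_p)_p$, and whose total $\mu$-mass estimate is $\sum_i \sum_p \mu(B_{x^i_p x^i_{p-1}})^{1/2} \le M \cdot C\, \mu(B)^{1/2} \le C'\mu(B_{xy})^{1/2}$, since $M$ is a fixed constant and $\mu(B)\le C_D^{\,?}\mu(B_{xy})$ by doubling.

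The core of the argument is to extract from this mass-controlled connected set $K$ an actual $\delta$-chain from $x$ to $y$. The idea: $K$ separates (some) $B_i$ from the complement of its $2^{7k}$-dilate, so in particular $K$ separates a small ball around $x$ from a small ball around $y$ whenever the configuration is set up so that $x$ and $y$ are on opposite "sides" — but more robustly, one connects $x$ to $K$, traverses $K$, and then connects $K$ to $y$. To connect $x$ to $K$: the point $x$ lies in $U$, which is covered by the $B_i$; follow a ball-chain in $\{B_i\}$ from the $B_i$ nearest $x$ until it meets a ball whose associated chain $(x^i_p)_p$ is part of $K$, and concatenate. To bound the cost of reaching $x$ from this structure one iterates: apply Proposition \ref{prop:kontinuumipropositio} at scale $R$, then at scale $\mu$-comparable to $\mu(B)/L$ around $x$ (note condition (2) gives geometric decay of the measure, hence of $\mu(B_{\cdot})^{1/2}$ along the scales by a factor $L^{-1/2}$), producing continua $K^{(0)}, K^{(1)}, K^{(2)}, \dots$ shrinking toward $x$, each connected to the previous by a short chain, with $j$-th stage contributing $\le C' L^{-j/2}\mu(B_{xy})^{1/2}$; summing the geometric series gives a total bounded by $C'' \mu(B_{xy})^{1/2}$, and the tail terminates once the scale drops below $\delta$ so that a single $\delta$-step reaches $x$ exactly. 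Do the symmetric construction at $y$, splice all the chains together (adjacent pieces share a point because consecutive-scale continua/balls intersect), and the resulting $\delta$-chain from $x$ to $y$ has $\sum \mu(B_{x_j x_{j-1}})^{1/2} \le C\mu(B_{xy})^{1/2}$.

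The main obstacle I anticipate is the bookkeeping that makes the telescoping rigorous: one must verify that consecutive-scale structures genuinely overlap so the spliced sequence is a legitimate $\delta$-chain (the $2^{7k}$-dilation in condition (3) and the covering property (1) should guarantee this, but the scale ratios have to be chosen consistently with $k$ and $L > C_D^{8k}$), and that the continuum $K$ at each scale really does let one route from the outer boundary down to the next-smaller ball around the center without extra cost — this uses the separation property together with the fact that $K$ is covered by the explicit $\delta$-chains, so "traversing $K$" literally means concatenating those chains and reordering them along a path in $K$. A second delicate point is ensuring $x$ and $y$ are handled symmetrically and that the top-scale continuum $K^{(0)}$ connects the $x$-side tower to the $y$-side tower; choosing the initial ball $B = B(x, cR)$ with $c$ large enough that $y\in U$ and its dilate controlled handles this, at the cost of a harmless constant from doubling. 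Everything else (the geometric-series summation, absorbing $M$ and $C_D$-powers into $C$) is routine once the chain has been assembled.
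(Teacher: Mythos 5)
Your proposal follows essentially the same route as the paper's proof: iterate Proposition \ref{prop:kontinuumipropositio} at geometrically decreasing scales around both $x$ and $y$ (with $L$ large enough — the paper takes $L = C_D^{15k}$ — so that $\mu(B^{l+1,z}) \le \tau\,\mu(B^{l,z})$ with $\tau<1$), verify via Lemma \ref{lemma:kontinuumilemma} that consecutive continua overlap so their union is connected, and read off a $\delta$-chain from the covering balls, summing a geometric series to get $C\mu(B_{xy})^{1/2}$. The one detail your sketch glosses over — which the paper handles by truncating each tower once its continuum fits inside $B(z, r_z/100)$ with $r_z\le 6\delta$ and $\mu(B(z,r_z))\le C_D^{-1}\varepsilon^2$, then letting $\varepsilon\to 0$ — is that the terminal link to $z\in\{x,y\}$ must be taken at a scale where its $\mu$-mass is negligible, not merely at scale $\delta$; your ``bookkeeping'' caveat correctly flags this.
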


\begin{proof}
Fix $x, y\in X$ and apply Proposition \ref{prop:kontinuumipropositio} to $B^1=B(x, 2^{2k}d(x, y))$ with $L=C_D^{15k}$. Note that $x$ and $y$ belong to the same component of $B^1$. Let $z=x$ or $z=y$. Let us define balls $B^{l, z}$ recursively for $l\geq 2$. Define $B^{1, z}=B^1$. Suppose we have defined the set $B^{n, z}$ for all $n\leq l$. Apply Proposition \ref{prop:kontinuumipropositio} with the same $L$ to $B^{l, z}$ to find a ball $B_{j}^{l, z}$ which contains $z$. By Lemma \ref{lemma:kontinuumilemma} $B_j^{l, z}$ is contained in the interior of some maximal continuum $K_{j'}^{l, z}$. Define $B^{l+1, z}=2^{7k}B_{j'}^{l, z}$. Note that Proposition \ref{prop:kontinuumipropositio} also yields the balls $B^{n, z}$ and $B_i^{n, z}$ and continua $K_i^{n, z}$ and $K^{n, z}$. Also, by the separation properties and Lemma \ref{lemma:kontinuumilemma}
\[
z\in B_j^{l, z}\subset \hat K_j^{l, z}\subset \hat K_{j'}^{l, z}\subset 2^{7k}B_{j'}^{l, z}=B^{l+1, z}.
\]
Let $\varepsilon>0$ and let $B_z=B(z, r_z)$ be a ball with $r_z\leq 6\delta$ and $\mu(B_z)\leq C_D^{-1}\varepsilon^2$. Define 
\[
K_{z}:=\bigcup_{n=1}^{l^\varepsilon_z}K^{n, z}
\]
where $l^\varepsilon_z$ is the smallest integer $l$ that satisfies $K^{l, z}\subset B(z, 100^{-1}r_z)$. Such a number exists, since $z\in B^{l, z}$ for all $l$. Moreover, our choice of $L$ gives $C_D^{7k}L^{-1}=\tau<1$ and 
\begin{equation}
\label{ananas}
\mu(B^{l, z})\leq C_D^{7k}L^{-1}\mu(B^{l-1, z})\leq \tau \mu(B^{l-1, z})\leq\ldots \leq \tau^{(l-1)}\mu(B^1). 
\end{equation}
In particular, $\mathrm{diam}(B^{l, z})\overset{l\rightarrow \infty}\longrightarrow 0$. 
We next show that $K_z$ is a continuum. It is clearly compact, and connectedness follows if 
\begin{equation}\label{eq:kontinuumieq}
K^{n, z}\cap K^{n+1, z}\neq\emptyset.
\end{equation}
Let $j$ be the index for which $2^{7k}B_j^{n, z}=B^{n+1, z}$. To show (\ref{eq:kontinuumieq}) it suffices to show that $K^{n, z}_j\cap K^{n+1, z}_i\neq\emptyset$ for some maximal $K_i^{n+1, z}$. By Lemma \ref{lemma:kontinuumilemma} there exists a maximal continuum $K^{n+1, z}_i$ such that the interiors of $K^{n+1, z}_i$ and $K^{n, z}_j$ intersect. Moreover either (\ref{eq:kontinuumieq}) holds or one of $K^{n+1, z}_i\subset\hat K^{n, z}_j$, $K^{n, z}_j\subset \hat K^{n+1, z}_i$ is true for any such $i$. Suppose $K^{n, z}_j\subset \hat K_i^{n+1, z}$. By separation properties $B_j^{n, z}\subset 2^{7k}B_i^{n+1, z}$, which together with our choice of $L$ leads to a contradiction:
\begin{align*}
\mu(B_j^{n, z})&\leq\mu(2^{7k}B_i^{n+1, z}) \leq C_D^{7k}\mu(B_i^{n+1, z}) \leq C_D^{7k}L^{-1}\mu(B^{n+1, z}) \\ 
&= C_D^{7k}L^{-1}\mu(2^{7k}B_j^{n, z}) \leq C^{14k}_DL^{-1}\mu(B_j^{n, z}) <\mu(B_j^{n, z}). 
\end{align*}
Now  if (\ref{eq:kontinuumieq}) were not true, $K_i^{n+1, z}\subset \hat K^{n, z}_j$ for every $i$ for which the interiors of $K^{n+1, z}_i$ and $K^{n, z}_j$ intersect. This is impossible, since \emph{every} ball $B^{n+1, z}_i$ lies in the interior of some maximal continuum and at least one of them intersects $K_j^{n, z}$. Hence (\ref{eq:kontinuumieq}) holds and $K_z$ is a continuum.

Finally, define 
\[
K=K_x\cup K_y.
\]
Note that $K$ is a continuum, since by construction $K^{1, x}=K^{1, y}$. Recall that for all $i, j, z$ there exists a finite $\delta$-chain $(x^{i, j, z}_p)_p$ in $2^{7k}B_j^{i, z}\setminus B_j^{i, z}$ such that 
\[
K^{i, z}_j\subset\bigcup_p \overline{5B_{x^{i, j, z}_px^{i, j, z}_{p-1}}}\subset \bigcup_p 6B_{x^{i, j, z}_px^{i, j, z}_{p-1}},
\]
and 
\[
\sum_p\mu(B_{x^{i, j, z}_px^{i, j, z}_{p-1}})^{1/2}\leq C\mu(B^{i, z}_j)^{1/2}.
\]
Since the set of balls
\[
\mathcal{B}:=\left\lbrace B(x^{i, j, z}_p, 6d(x^{i, j, z}_p, x^{i, j, z}_{p-1})),  B(x^{i, j, z}_{p-1}, 6d(x^{i, j, z}_p, x^{i, j, z}_{p-1})) \right\rbrace_{i, j, p, z}
\]
forms an open cover for the continuum $K$, we may extract a finite chain of balls $(A_i)_{i=1}^{N-1}$ of the set $\mathcal{B}$ so that, denoting $A_0=B_x$, $A_N=B_y$ we have $A_i\cap A_{i-1}\neq\emptyset$ for $i=1, \ldots N$. Let $x_0=x$, $x_{2N}=y$ and for other indices choose $x_{2i}\in A_i$ so that $A_i=B(x_{2i}, r_i)$ for some $r_i\leq 6\delta$. Let $x_{2i-1}\in A_i\cap A_{i-1}$ for $i=1, \ldots, N$. Now $(x_i)_{i=0}^{2N}$ is a $6\delta$-chain between the points $x$ and $y$. Moreover, by \eqref{ananas} 

\begin{align*}
&\sum_{i=1}^{2N}\mu(B_{x_ix_{i-1}})^{1/2}\leq2\sum_{i=0}^N\mu(2A_i)^{1/2} \leq C\sum_{i=1}^{N-1}\mu(A_i)^{1/2}+4\varepsilon \\
&\leq C\sum_{B\in\mathcal{B}}\mu(B)^{1/2}+4\varepsilon \leq C\sum_{z, i, j, p}\mu(B(x^{i, j, z}_p, d(x^{i, j, z}_p, x^{i, j, z}_{p-1})))^{1/2}+4\varepsilon\\
&\leq C\sum_{z, i, j}\sum_p\mu(B_{x^{i, j, z}_px^{i, j, z}_{p-1}})^{1/2}+4\varepsilon \leq C\sum_{z, i}\sum_j\mu(B_j^{i, z})^{1/2}+4\varepsilon \\
&\leq C\sum_{z}\sum_i M\mu(B^{i, z})^{1/2}+4\varepsilon \leq CM\sum_{z}\sum_i \tau^{(i-1)/2}\mu(B^1)^{1/2}+4\varepsilon \\
&\leq CM\mu(B^1)^{1/2}+4\varepsilon = CM\mu(B(x, 2^{2k}d(x, y)))^{1/2}+4\varepsilon\\
&\leq CM\mu(B_{xy})^{1/2}+4\varepsilon. 
\end{align*}
Since $\varepsilon$ is arbitrary, the claim follows.

\end{proof}



\section{Concluding remarks}
\label{sec:rem}
It is natural to ask if Theorem \ref{mainthm} remains valid with weak metric doubling measures of dimension $s \neq 2$. The two lemmas below show that it does not. 

\begin{lemma}
\label{cancun}
Let $(X,d)$ be a linearly locally connected metric space homeomorphic to $\Ss^2$, and $0<s<2$. Then $X$ does not carry weak metric doubling measures of dimension $s$. 
\end{lemma}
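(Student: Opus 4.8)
The plan is to argue by contradiction: assume $\mu$ is a weak metric doubling measure of dimension $s$ with $0<s<2$ on the LLC two-sphere $X$, and derive a contradiction by showing that $q_{\mu,s}(x,y)$ is forced to be too large relative to $\mu(B_{xy})^{1/s}$ to be compatible with anything — indeed, I expect $q_{\mu,s}$ to be \emph{infinite} for distinct points, which contradicts the finiteness built into the weak metric doubling condition (or at least contradicts the upper control one gets for free). The key point is that when $s<2$, the exponent $1/s$ is \emph{too large}: a $\delta$-chain crossing a ``unit-size'' annulus must contain roughly $N$ links each of measure comparable to $\mu(B)/N^{?}$, and when we sum the $1/s$-powers the number of links wins. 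Concretely, I would first run the covering argument from the proof of Lemma \ref{lemma:annuluskontinuumi} verbatim up through the step that produces, for a ball $B=B(x,r)$ and scale $\delta$, the count $m$ of disjoint balls $B_j$ of measure $\sim\varepsilon^2$ covering the annulus, together with the lower bound $u(z)\ge\sqrt m/C''$ on the ``crossing number'' of any ball-chain through the thin annulus $S(x,2^{3k}r)$ to $S(x,2^{4k}r)$. This lower bound on the number of links in \emph{any} $\delta$-chain crossing the annulus is exactly what is needed, and it does not use $s=2$.

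Next I would estimate from below the $\mu$-length of any such crossing $\delta$-chain $(z_i)$. Each ball $B_{z_iz_{i-1}}$ has measure $\gtrsim \varepsilon^2$ (up to the doubling constant), and there are $\gtrsim\sqrt m$ of them, so
\[
\sum_i \mu(B_{z_iz_{i-1}})^{1/s} \gtrsim \sqrt m\cdot (\varepsilon^2)^{1/s} = \sqrt m\cdot \varepsilon^{2/s}.
\]
On the other hand $m\varepsilon^2\sim\mu(B(x,2^{6k}r))\sim\mu(B(x,r))$ (doubling), so $\varepsilon^2\sim \mu(B(x,r))/m$, giving
\[
\sum_i \mu(B_{z_iz_{i-1}})^{1/s} \gtrsim \sqrt m\cdot \Big(\frac{\mu(B(x,r))}{m}\Big)^{1/s} = m^{\,1/2-1/s}\,\mu(B(x,r))^{1/s}.
\]
Since $s<2$ we have $1/2-1/s<0$, so the exponent on $m$ is negative; but as $\delta\to 0$ we may take $\varepsilon\to 0$, hence $m\to\infty$ — wait, that makes the bound go to zero, which is the wrong direction. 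The fix is to reverse the roles: rather than bounding a crossing chain from below, I should argue that to connect the \emph{fixed} pair $x,y$ one must cross a \emph{fixed} annulus of definite size, and within that annulus subdivide dyadically. The correct mechanism is a \emph{dyadic scaling} argument: iterate Lemma \ref{lemma:annuluskontinuumi}'s crossing-number estimate across the scales $r,\,2^{-1}r,\,2^{-2}r,\dots$ near the point $x$. Crossing the annulus at scale $2^{-j}r$ forces $\gtrsim \sqrt{m_j}$ links of measure $\sim\mu(B(x,2^{-j}r))/m_j$; summing the $1/s$-powers over a single scale gives $\gtrsim m_j^{1/2-1/s}\mu(B(x,2^{-j}r))^{1/s}$, and by Lemma \ref{lemma:tuplauslemma} we may \emph{choose} the combinatorics ($m_j$ bounded below by a fixed power of the scale separation, independent of $\delta$, via the $u(z)\ge\sqrt m/C''$ bound with $m$ controlled \emph{below} too) so that each scale contributes a definite fraction, say $c\,\mu(B(x,2^{-j}r))^{1/s}$. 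Summing the geometric-type series over $j$, and using that $\mu(B(x,2^{-j}r))^{1/s}$ decays like a fixed power (Lemma \ref{lemma:tuplauslemma}) but the \emph{number} of relevant links grows, the total $q_{\mu,s}^\delta(x,y)$ is bounded below by a divergent sum as we let the chain be forced through infinitely many scales — equivalently, comparing with the genuine metric doubling measure case, the ratio $q_{\mu,s}(x,y)/\mu(B_{xy})^{1/s}$ blows up, contradicting \eqref{weakd} being an \emph{equality}-type two-sided estimate once combined with the companion Lemma (the $s>2$ case, presumably the next lemma) or contradicting finiteness directly.

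I expect the main obstacle to be making the dyadic iteration rigorous: one must (i) get a lower bound on $m_j$ (the covering number of the annulus at scale $2^{-j}r$), not merely the upper bound used in Lemma \ref{lemma:annuluskontinuumi}, which follows from the disjointness of the $B_j$ together with $m_j\varepsilon^2\gtrsim\mu(B(x,2^{6k}\cdot 2^{-j}r))$ and $\varepsilon^2\lesssim$ a fixed fraction of the measure of each $B_j$, hence $m_j\gtrsim 1/(\text{fraction})$ — this needs the lower bound $\mu(B(w,r_w))\ge\varepsilon^2/(2C_D)$ from the proof of Lemma \ref{lemma:annuluskontinuumi} — and (ii) ensure the contributions at distinct scales are genuinely additive rather than overlapping, i.e., that a $\delta$-chain from $x$ to $y$ really must contain disjoint sub-chains crossing each dyadic annulus $\overline{B(x,2^{-j}r)}\setminus B(x,2^{-j-1}r)$, which is a connectedness/topology argument of the type already used repeatedly above. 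Once these are in place, the inequality $1/2-1/s<0$ does all the work, and letting $\delta\to 0$ (so that we are forced through more and more scales before the chain can ``stop'' near $x$) drives $q_{\mu,s}(x,y)=\infty$, so \eqref{weakd} cannot hold for any finite $C_W$ — contradiction.
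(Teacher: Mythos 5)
Your proposal does not match the paper's proof and contains a genuine gap in the logic of the contradiction. The paper's argument is a short exponent manipulation that makes no use of covering or annulus arguments at all: from the dimension-$s$ chain lower bound $\mu(B_{xy})^{1/s}\lesssim \sum_i\mu(B_{x_ix_{i-1}})^{1/s}$, one writes each summand as $\mu(B_{x_ix_{i-1}})^{1/s-1/2}\mu(B_{x_ix_{i-1}})^{1/2}$, pulls out $\max_i\mu(B_{x_ix_{i-1}})^{1/s-1/2}$ (which tends to $0$ as $\delta\to 0$ because $1/s-1/2>0$ when $s<2$), and concludes that $\mu(B_{xy})^{1/2}\leq \epsilon\, q_{\mu,2}(x,y)$ for every $\epsilon>0$. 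Hence $\mu$ is a weak metric doubling measure of dimension $2$ and $q_{\mu,2}(x,y)=\infty$, contradicting Proposition \ref{estimate} (equivalently Theorem \ref{mainthm}), which asserts that $q_{\mu,2}$ is a finite metric. The entire point is to \emph{bootstrap to dimension $2$} and invoke the already-proved main result there.

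Your plan instead tries to force $q_{\mu,s}(x,y)=\infty$ at the original dimension $s$ and asserts this contradicts \eqref{weakd}. But \eqref{weakd} is only a lower bound on $q_{\mu,s}$, so $q_{\mu,s}(x,y)=\infty$ satisfies it trivially; there is no ``finiteness built into'' Definition \ref{wmdm}, and Proposition \ref{estimate} gives upper control only at dimension $2$, not at $s<2$. So even a successful version of your technical argument would not yield a contradiction without the extra step of passing to dimension $2$. Moreover, the dyadic-annulus mechanism does not produce divergence: what \eqref{weakd} gives for a sub-chain crossing the annulus at scale $2^{-j}r$ is a contribution bounded below by a constant times $\mu(B(x,2^{-j}r))^{1/s}$, and by Lemma \ref{lemma:tuplauslemma} these quantities decay geometrically in $j$, so their sum over scales converges rather than diverging. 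Your proposed ``fix'' (choosing the combinatorics so each scale contributes a ``definite fraction'') is incompatible with the fact you yourself flagged, namely that $m_j\to\infty$ as $\delta\to 0$ while the exponent $1/2-1/s$ is negative, which drives the per-scale contribution in that estimate to zero, not to a definite fraction. The correct mechanism is the change of exponent to $1/2$ and the appeal to Theorem \ref{mainthm} at dimension $2$.
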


\begin{proof}
Assume towards a contradiction that $X$ carries such as measure $\mu$. Then there exists $C >0$ such that for every $x,y \in X$ the following holds: 
if $(x_i)_{i=0}^m$ is a $\delta$-chain from $x$ to $y$ and if $\delta$ is small enough, then  
\begin{eqnarray*}
\mu(B_{xy})^{1/2} &=& \mu(B_{xy})^{1/2-1/s}\mu(B_{xy})^{1/s} \leq C \mu(B_{xy})^{1/2-1/s} \sum_{i=1}^{m} \mu(B_{x_ix_{i-1}})^{1/s} \\
&\leq& C  \mu(B_{xy})^{1/2-1/s} \max_{i}  \mu(B_{x_ix_{i-1}})^{1/s-1/2} \sum_{i=1}^{m} \mu(B_{x_ix_{i-1}})^{1/2}. 
\end{eqnarray*}
Notice that 
$$
\max_{i}  \mu(B_{x_ix_{i-1}})^{1/s-1/2} \to 0 \quad \text{as } \delta \to 0.  
$$
Applying the estimates to all $\delta$-chains and letting $\delta \to 0$, we conclude that $\mu$ is a weak metric doubling measure of dimension $2$ and 
$$
\mu(B_{xy})^{1/2} \leq \epsilon  q_{\mu,2}(x,y) \quad \text{for all } \epsilon >0. 
$$
Since $\mu(B_{xy})>0$ for all distinct $x$ and $y$, if follows that $q_{\mu,2}(x,y)= \infty$. This contradicts Theorem 
\ref{mainthm}. 
\end{proof}

\begin{lemma}
Fix $s>2$. Then there exists a metric space $(X,d)$, homeomorphic to $\Ss^2$ and LLC, such that $X$ carries a weak metric doubling measure of dimension $s$ but there is no quasisymmetric $f:X \to \Ss^2$. 
\end{lemma}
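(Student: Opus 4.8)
The plan is to produce a concrete example showing that for $s>2$ the existence of a weak metric doubling measure of dimension $s$ does not force the space to be a quasisphere. The natural source of such examples is the family of "snowsphere"-type fractal spheres or, more simply, spaces that are bi-Lipschitz (or quasisymmetric) to $\Ss^2$ equipped with a \emph{non-standard} self-similar structure, and whose conformal dimension exceeds $2$. So first I would take $(X,d)$ to be an Ahlfors $s$-regular sphere which is LLC but has (Ahlfors regular) conformal dimension strictly greater than $2$; by the Bonk--Kleiner theorem, or rather by the fact that a quasisymmetric image of $\Ss^2$ would have to be Ahlfors $2$-regularizable and hence have conformal dimension $2$, such an $X$ admits no quasisymmetric map to $\Ss^2$. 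Concrete candidates are the "antenna"-type or Wu-type fractal spheres cited in the introduction (\cite{Bi}, \cite{Wu}, \cite{V}), or a round metric modified on a Cantor-like subset; the essential requirement is: homeomorphic to $\Ss^2$, LLC, Ahlfors $s$-regular with $s>2$, not a quasisphere.

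Next I would check that $X$ carries a weak metric doubling measure of dimension $s$. The obvious candidate is the $s$-regular Hausdorff measure $\mu = \haus^s$ itself (normalized), together with the metric $q = d$. I would need to verify the inequality \eqref{weakd} for $\mu$ of dimension $s$, i.e. $C_W^{-1}\mu(B_{xy})^{1/s} \le q_{\mu,s}(x,y)$. Since $\mu$ is $s$-regular, $\mu(B_{xy})^{1/s} \asymp d(x,y)$, so it suffices to show $q_{\mu,s}(x,y) \gtrsim d(x,y)$. For any $\delta$-chain $(x_j)$ from $x$ to $y$ we have $\mu(B_{x_jx_{j-1}})^{1/s} \asymp d(x_j,x_{j-1})$, and $\sum_j d(x_j,x_{j-1}) \ge d(x,y)$ by the triangle inequality; taking the infimum over chains and then the $\limsup$ as $\delta\to 0$ gives $q_{\mu,s}(x,y) \gtrsim d(x,y) \asymp \mu(B_{xy})^{1/s}$, which is exactly \eqref{weakd}. (One must also record that $\mu$ is doubling, which is immediate from $s$-regularity, and that $X$ is uniformly perfect so that the constants behave, which again follows from $s$-regularity and connectedness.) This is the routine half.

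I would then assemble the pieces: $X$ is homeomorphic to $\Ss^2$ by construction, LLC by construction, carries $\mu=\haus^s$ as a weak metric doubling measure of dimension $s$ by the previous paragraph, and is not a quasisphere because its conformal dimension exceeds $2$ (a quasisymmetric copy of $\Ss^2$ would be quasisymmetrically equivalent to an Ahlfors $2$-regular space by the converse direction in Theorem \ref{mainthm}, hence would have conformal dimension $\le 2$). The one place that deserves care — and the main obstacle — is producing, and citing cleanly, a single explicit $X$ that simultaneously satisfies all three geometric requirements: homeomorphic to $\Ss^2$, LLC (equivalently, quasisymmetrically a sphere would need LLC, but here we need LLC without being a quasisphere), and Ahlfors $s$-regular with conformal dimension $>2$. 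The snowsphere constructions of Meyer (\cite{Me1}) and the fractal spheres of Wu (\cite{Wu}) and Bishop (\cite{Bi}) are designed precisely to be LLC $\Ss^2$'s that are Ahlfors regular with controllable (and in some cases supra-$2$) dimension; I would invoke one of these, verifying that the parameters can be tuned so that the Hausdorff dimension $s$ is both the Ahlfors regularity exponent and strictly below the conformal dimension, so that $X$ is genuinely not a quasisphere. Alternatively — and perhaps more self-contained — one can take the Ahlfors regular sphere built by "slitting and gluing" along a Cantor set of positive modulus so that the space fails the combinatorial Loewner-type condition of \cite[Theorem 11.1]{BK}; but invoking an already-published fractal sphere is cleaner, and so I would go that route, spelling out only that the cited construction yields the three properties and, in particular, that its Ahlfors regularity dimension exceeds the minimal value $2$ attained by genuine quasispheres.
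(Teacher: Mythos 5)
Your overall strategy — take an Ahlfors $s$-regular, LLC metric sphere that is not a quasisphere, and observe that the $s$-regular measure $\mathcal{H}^s$ is automatically a weak metric doubling measure of dimension $s$ because $\mu(B_{xy})^{1/s}\asymp d(x,y)$ and the triangle inequality gives $\sum_j d(x_j,x_{j-1})\ge d(x,y)$ along any chain — is exactly the logic underlying the paper's proof, and that verification step is correct.

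However, there is a genuine gap in the concrete example. The constructions you propose to cite (\cite{Bi}, \cite{Me1}, \cite{V}, \cite{Wu}) are all examples of fractal surfaces that \emph{are} quasispheres: those papers establish sufficient conditions under which fractal spheres can be quasisymmetrically parametrized. They are $s$-regular for some $s>2$, but their (Ahlfors regular) conformal dimension is exactly $2$, so none of them serves your purpose. Being Ahlfors $s$-regular with $s>2$ is not by itself an obstruction to being a quasisphere; you need the conformal dimension, not just the Hausdorff dimension, to exceed $2$. The paper sidesteps the search by using the standard example: a Rickman rug, i.e.\ $\mathbb{R}^2$ with the product metric $d((x_1,y_1),(x_2,y_2)) = (|x_1-x_2|^2 + |y_1-y_2|^{2/(s-1)})^{1/2}$ (a snowflake of the real line in one coordinate, times the ordinary line), equipped with $\mu=\Ha^1\times\Ha^{s-1}\asymp$ Lebesgue. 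This is $s$-regular, LLC, and it is classical that it admits no quasisymmetric map onto the plane; a stereographic-type projection produces the compact $\Ss^2$ version. So your argument goes through once the Rickman rug (or another genuine non-quasisphere such as Semmes's \cite{Se2} examples) is substituted for the quasisphere examples you cite.
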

\begin{proof}
Let $(\mathbb{R}^2,d)$ be a Rickman rug; $d$ is the product metric 
$$
d((x_1,y_1),(x_2,y_2))= \Big(|x_1-x_2|^2+|y_1-y_2|^{2/(s-1)} \Big)^{1/2}. 
$$
It is well-known that there are no quasisymmetric maps from $(\mathbb{R}^2,d)$ onto the standard plane. Moreover, it is not difficult to show that $\mu=\Ha^1 \times \Ha^{s-1}$ is a weak metric doubling measure of dimension $s$ on $(\mathbb{R}^2,d)$. To construct 
a similar example homeomorphic to $\Ss^2$, one can apply a suitable stereographic projection. 
\end{proof}

It would be interesting to extend Theorem \ref{mainthm} to higher dimensions. Recall that the Bonk-Kleiner theorem (Theorem \ref{BKtheorem}) does not extend to dimensions higher than $2$, see \cite{Se2}, \cite{HeWu}, \cite{PW}. 

\begin{question}
\label{high}
Let $(X,d)$ be a metric space homeomorphic to $\Ss^n$, $n\geq 3$. Assume that $X$ is linearly locally contractible and carries a 
weak metric doubling measure of dimension $n$. Is there a quasisymmetric $f:(X,d) \to (X,d')$, where $(X,d')$ 
is Ahlfors $n$-regular? 
\end{question}

Recall that $(X,d)$ is linearly locally contractible if there exists $\la' \ge 1$ such that $B(x,R) \subset X$ is contractible in $B(x,\la' R)$ for every $x \in X, 0<R<\diam X/\la'$. Linear local contractibility is equivalent to the LLC condition when $X$ is homeomorphic to $\Ss^2$, see \cite{BK}. 

The basic tool in the proof of Theorem \ref{mainthm} was a coarea-type estimate for real-valued functions. Extending our method to higher dimensions would require similar estimates for suitable maps with values in $\R^{n-1}$, which are difficult to construct when $n\geq 3$. This problem is related to the deep results of Semmes \cite{Se3} on Poincar\'e inequalities in Ahlfors $n$-regular and linearly locally contractible $n$-manifolds.

\vskip5pt

Finally, it is also desirable to characterize the metric spheres that can be uniformized by \emph{quasiconformal} homeomorphisms which 
are more flexible than quasisymmetric maps, see \cite{Raj}. However, it is not clear which definition of quasiconformality should be used in the generality of possibly fractal surfaces. Our methods suggest a measure-dependent modification to the familiar geometric definition. More precisely, given a measure $\mu$, conformal modulus should be defined applying not the usual path length but a \emph{$\mu$-length} as in Section \ref{sec:prel}. 

\vskip1cm 

\noindent Department of Mathematics and Statistics, University of
Jyv\"askyl\"a, P.O. Box 35 (MaD), FI-40014, University of Jyv\"askyl\"a, Finland.
\vskip0.2cm \noindent {\it E-mail address:} {\bf atte.s.lohvansuu@jyu.fi}, {\bf kai.i.rajala@jyu.fi},\\ 
{\bf martti.i.rasimus@jyu.fi}

\end{document}